\documentclass{article}
\usepackage{fullpage}

\usepackage{amsmath}
\usepackage{amsfonts}
\usepackage{amsthm}
\usepackage{amssymb}
\usepackage{multirow}
\usepackage{algorithm,algorithmic}
\usepackage{mathrsfs}
\usepackage{color,xcolor}
\usepackage{ifpdf}
\usepackage{psfrag}
\usepackage{graphicx,graphics,subfigure,epsfig}
\usepackage{url}
\usepackage{hyperref}
\usepackage{ifpdf}
\usepackage{psfrag}
\usepackage{graphicx,graphics,subfigure,epsfig}
\usepackage{pgf,tikz}
\usetikzlibrary{ bending, patterns,calc}
\usepackage{pgfplots}
\usepackage{xspace}
\usepackage{xargs}
\usepackage{float}
 \usepackage{ booktabs,array}
\usepackage{extarrows}

\usepackage{appendix}
\usepackage{chngcntr}
\AtBeginEnvironment{subappendices}{%
\chapter*{Appendix}
\addcontentsline{toc}{chapter}{Appendices}
\counterwithin{figure}{section}
\counterwithin{table}{section}
}
\usepackage{bm}
\usepackage{dutchcal}
\usepackage{stmaryrd}

\usepackage[colorinlistoftodos,prependcaption,textsize=tiny]{todonotes}

\newcommand{\yolo}[1]{\textcolor{black}{#1}}
\newcommand{\yoloo}[1]{\textcolor{black}{#1}}

\newtheorem{theorem}{Theorem}[section]

\newtheorem{lemma}[theorem]{Lemma}

\newtheorem{proposition}[theorem]{Proposition}
\newtheorem{remark}[theorem]{Remark}

\newcommand{\dpar}[2]{\dfrac{\partial #1}{\partial #2}}

\newcommand{\R}{\mathbb R}

\renewcommand{\P}{\mathbb P}

\newcommand{\DD}{\mathcal D}

\newcommand{\FF}{\mathcal F}

\newcommand{\II}{\mathcal I}

\newcommand{\TT}{\mathcal{T}}

\newcommand{\bba}{\mathbf{a}}

\newcommand{\bbf}{\mathbf{f}}
\newcommand{\bbg}{\mathbf{g}}

\newcommand{\bbm}{\mathbf{m}}
\newcommand{\bbn}{\mathbf{n}}

\newcommand{\bbu}{\mathbf{u}}
\newcommand{\bbv}{\mathbf{v}}

\newcommand{\bbx}{\mathbf{x}}

\newcommand{\bbK}{\mathbf{K}}

\newcommand{\bbN}{\mathbf{N}}

\newcommand{\bbPhi}{{\mathbf {\Phi}}}

\newcommand{\Id}{\mathbb{I}}

\newcommand{\hbbg}{\hat{\mathbf{g}}}
\newcommand{\hbbf}{\hat{\mathbf{f}}}

\usepackage{amsopn}

\newcommand*\xbar[1]{%
  \hbox{%
    \vbox{%
      \hrule height 0.5pt % The actual bar
      \kern0.4ex%         % Distance between bar and symbol
      \hbox{%
        \kern-0.05em%      % Shortening on the left side
        \ensuremath{#1}%
        \kern-0.00em%      % Shortening on the right side
      }%
    }%
  }%
}
\newcommand{\bsigma}{{\bm {\sigma}}}

\newcommand{\bomega}{{\bm {\omega}}}

\newcommand{\bPhi}{{\bm \Phi}}

\usepackage[style=numeric,backend=biber,maxnames=999,isbn=false,url=false, doi=false]{biblatex}
	\title{Construction of entropy satisfying Active Flux-type methods}

\author{R. Abgrall  and Y. Liu\\
  Institute of Mathematics, University of Z\"urich  }
\date{\today}
\begin{document}
\maketitle
\begin{abstract}
This paper is devoted to the analysis of the entropy stability properties of Active Flux\yolo{-type} scheme for \yolo{a} hyperbolic system equipped with one entropy inequality. We also refine, with respect to \cite{Abgrall_AF},  the consistency assumptions needed by these schemes. This type of scheme evolves two sets of degrees of freedom: point values that are chosen on the boundary of the elements that cover the computational domain, and the average of the solution in these elements. We show that the only thing to do is to get an entropy inequality for the average values, the point values degrees of freedom do not play any role. We construct a monolithic scheme which is bound preserving of \cite{BP_Pampa_VEM}, 
non oscillatory following \cite{OEPampa},
and entropy diminishing. The entropy condition is implemented in Tadmor's framework\cite{TadmorEntropy}, i.e. for the semi-discrete scheme only. The scheme is tested on the Kurganov-Popov-Petrova test case \cite{KPP} which is known to \yolo{be} sensitive to the satisfaction of an entropy inequality. We show that our entropy correction is effective: if we do not activate the bound-preserving nor the non oscillatory condition, we get the correct solution with some spurious wiggles, as expected. Though the development, implementation and tests are done with the triangle version of the scheme, the same method can be used for polygonal meshes, following \cite{BP_Pampa_VEM}.
\end{abstract}
\maketitle
%\listoftodos
%\input{notation.tex}
%\tableofcontents
%%%%%%%%%%%%%%%
\section{Introduction}
We are interested in the numerical discretisation of the hyperbolic system of conservation laws
\begin{equation}
\label{eq:1}
\begin{split}
\dpar{\bbu}{t}+\text{ div }\bbf(\bbu)&=0,\\
\bbu(\bbx,0)&=\bbu_0(\bbx),
\end{split} \qquad \bbx\in \R^d, ~t>0
\end{equation}
with the usual assumptions on the flux $\bbf$. The problem \eqref{eq:1} is defined for $\bbu\in \DD\subset \R^{\yolo{q}}$ where $\DD$ is the invariant domain.  We also assume the existence of an entropy $\eta$, i.e a strictly convex function defined on $\DD$, and an entropy flux $\bbg$ satisfying
$$\big (\nabla_\bbu\eta\big )^T\nabla_\bbu\bbf=\nabla_\bbu\bbg$$
such  the solutions of \eqref{eq:1} also satisfy 
\begin{equation}\label{eq:entropy}
\dpar{\eta(\bbu)}{t}+ \bbg(\bbu)\leq 0
\end{equation}
in the sense of distribution.

The canonical example is that of the Euler equations where
$$\bbu=\begin{pmatrix}\rho \\ \bbm\\ E\end{pmatrix} \quad \bbf=\begin{pmatrix} \bbm \\ \frac{\bbm\otimes \bbm}{\rho}+p\Id\\
\frac{(E+p)}{\rho}\bbm
\end{pmatrix}$$
where  $\rho$ is the density, $\bbm$ the momentum, $E$ the total energy. We define the internal energy
$$e=E-\frac{\bbm^2}{2\rho},$$
the pressure is assumed to be of the form $p=p(\rho, e)$, and the velocity $\bm\nu=\bbm/\rho$. The simplest model is that of a perfect gas,
$$p=(\gamma-1)e,$$ where $\gamma$ is a constant. We stick to this equation of state (EOS) in this paper, the generalisation to thermo-dynamically consistant EOS can be proceed with the same technique.
For the Euler equations, $\yolo{q}=2+d$, and the invariant domain is
$$\DD=\{\bbu=(\rho, \bbm, E)^T\text{ such that }\rho>0 \text{ and }e>0\}.$$
This set is strictly convex.

The format of this paper is as follows. First we describe the scheme in the case of a triangular mesh, with some emphasis on a blended scheme between a first order one and a high order one. Then we show a formal truncation error analysis, and recall the proof of a Lax-Wendroff like theorem. In passing, we refine the assumptions for the point value update in comparison with those given in \cite{Abgrall_AF}. The third part we show how to obtain an entropy satisfying flux. We show in particular that we only need to do something on the average value, not the point values. We demonstrate the effectiveness of the procedure on the Kurganov-Popov-Petrova test case which is known to be very sensitive to the satisfaction, or not, of an entropy condition. Some conclusions follow. In particular this method extends without modification to the scheme of \cite{BP_Pampa_VEM}.
%%%%%%%%%%%%%%%%%%%%%%%%%%%%%

\section{Numerical schemes}
%%%%%%%%%%%%%%%%%%%%%%%%%%%%%
%%%%%%%%%%%%%%%%%%%%%%%%%%%%%

We are interested in Active Flux-type discretisation. In this paper we specialise to schemes that are defined on triangular type meshes. The case of polygonal meshes, as covered in \cite{BP_Pampa_VEM}, will not be considered in this note to simplify the notations but exactly the same technique could be used.

In this section, after having described the scheme, we recall the consistency requirements that are needed to show a Lax Wendroff theorem, as well as a formal truncation error. 
\subsection{Description}
%%%%%%%%%%%%%%%%%%%%%%%%%%%%%

We consider a family $\mathcal{T}_h$ of simplex in $\R^d$, $d=2,3$, denoted generically by $K$. For a given $K$, $h_K$ is its diameter and $\rho_K$ is inner diameter. The maximum of the $h_K$ is denoted by $h$. Let $\gamma\in ][1,+\infty[$. We consider the family of triangulations such that for all $K\in \mathcal{T}_h$,  $\rho_k/h_K>\gamma$. These triangulations are regular in the sens of finite element. In the following, the parameter $\gamma$ will no longer appear, however when we talk about regular triangulation, we mean that we consider a value of $\gamma$, and all the triangulations satisfy this requirement for that value of $\gamma$.

For simplicity we stick to $d=2$, the generalisation to $d=3$ is immediate.  The triangulation  is assumed to be conformal, i.e. if $K$ and $K'$ are two polygons, then either $K\cap K'=\emptyset$ or $K\cap K'$ is a common face of $K$ and $K'$, or $K\cap K'$ reduces to a common vertex. The conformity assumption can be removed if we would have had a polygonal mesh. The set of faces/edges of $K$ is $\FF_K$.

The solution of \eqref{eq:1} is, \yolo{according to} \cite{AbgrallLinLiu2025}, 
in the following space
$$V_h=\bigg (\oplus_{K}V(K)\bigg )\cap C^0(\R^d)$$ with
$$V(K)=(\P^2(K)\oplus \R b\big )^{\yolo{q}}$$
where, if $\lambda_1,\lambda_2, \lambda_3$ are the barycentric coordinate of the triangle $K$, the bubble $b$ is
$$b=60\lambda_1\lambda_2\lambda_3.$$
The coefficient is chosen such that $\int_Kb\; d\bbx=\vert K\vert$. A function in $V_h(K)$ is completely described 
 by the following degrees of freedom:
\begin{itemize}
\item The values of $\bbu$ at the vertices of $K$
\item For any edge $e\in \FF_K$, the value of $\bbu$ at the center of $e$,
\item The average value of $\bbu$ on $K$.
\end{itemize}
We denote the point values degrees of freedom by $\bsigma$ (i.e. vertices and mid-points), the value of $\bbu$ at $\bsigma$ by $\bbu_\bsigma$. The average value is denoted by $\xbar{\bbu}_K$.
If $\{\varphi_\bsigma\}$ are the Lagrange basis functions with respect to the point value degrees of freedom (the classical $\P^2$ Lagrange functions), the basis functions $\{\psi_\bsigma\}$ are:
\begin{itemize}
\item For the vertex $\bsigma$
$$\psi_\bsigma=\varphi_\bsigma,$$
\item For the mid-point of the edges,
$$\psi_\bsigma=\varphi_\bsigma-\frac{1}{3}b.$$
\end{itemize}
If $\bbu\in V_h(K)$, then
$$\bbu_h=\sum_{\bsigma\in K}\bbu_\bsigma\psi_{\bsigma}+\xbar\bbu_K b.$$
Here we have chosen to consider the case of a quadratic approximation for the simplicity of the text only.

The semi-discrete scheme is:
\begin{subequations}\label{scheme}
\begin{itemize}
\item For the average values
\begin{equation}
\label{scheme:ave}
\vert K\vert \dfrac{d\xbar{\bbu}_K}{dt}+\oint_{\partial K} \hbbf_\bbn\; d\gamma=0
\end{equation}
where $\vert K\vert$ is the volume of $K$, $\bbn$ is the outward unit vector, $\vert e\vert$ is the length of the edge $e$ and 
\begin{equation}
\label{scheme:ave2}
\oint_{\partial K} \hbbf_\bbn\; d\gamma=\sum_{e\in \FF_K}\vert e\vert \hbbf_{\bbn_e}(\yolo{\xbar\bbu}_K,\yolo{\xbar\bbu}_{K_{e,-}},\yolo{\bbu_{\bsigma\in K}})
\end{equation}
and using a Gaussian quadrature formula, with weights $\omega_{quad}$ and quadrature points $\bbx_{quad}$,
\begin{equation}\label{scheme:ave:3}
\hbbf_{\bbn_e}(\yolo{\xbar\bbu}_K,\yolo{\xbar\bbu}_{K_{e,-}},\yolo{\bbu_{\bsigma\in K}})=\ell_e\bigg (\underbrace{ \sum_{quad}\omega_{quad}\;\bbf\big (\bbu_h(\bbx_{quad})\big)\cdot\bbn_e}_{\hbbf^{HO}_{\bbn_e}}\bigg )+(1-\ell_e)\hbbf_{\bbn_e}^{LO}( \yolo{\xbar\bbu}_K,\yolo{\xbar\bbu}_{K_{e,-}})
\end{equation}
where $K_{e,-}$ is the polygon on the other side of $e$. Often we will write the flux as $\hbbf_{\bbn}$ when there is no ambiguity.  The parameter $\ell_e$ is chosen as in \cite{BP_Pampa_VEM} or \cite{OEPampa}, or in order to satisfy an entropy inequality. Here, $\hbbf^{LO}_{\bbn_e}$ is Rusanov's  numerical flux,
$$\hbbf_{\bbn_e}^{LO}( \yolo{\xbar\bbu}_K,\yolo{\xbar\bbu}_{K_{e,-}})=\frac{1}{2}\bigg (\bbf(\yolo{\xbar\bbu}_K)\cdot\bbn_e+\bbf(\yolo{\xbar\bbu}_{K_{e,-}})\cdot\bbn_e\bigg)+{\alpha}(\yolo{\xbar\bbu}_K-\overline{\overline{\bbu}}_K)$$
with $$\yolo{\overline{\overline{\bbu}}_K=\frac{\xbar{\bbu}_K+\xbar{\bbu}_{K_{e,-}}}{2}}$$ and $\alpha$ \yolo{is} the largest speed $\alpha(\yolo{\xbar\bbu}_K, \yolo{\xbar\bbu}_{K_{e,-}}, \bbn_e)$ of the Riemann problem between the states $\yolo{\xbar\bbu}_K$ and $\yolo{\xbar\bbu}_{K_{e,-}}$ in the direction $\bbn_e$, see \cite{GuermondPopovFast} for a practical evaluation. 

We will discuss later the choice of $\ell_e$ in the text.
\item For the point values
\begin{equation}
\label{scheme:pt}
\dfrac{d\bbu_\bsigma}{dt}+\sum_{K, \bsigma\in K} \bPhi_\bsigma^K(\bbu)=0
\end{equation}

\begin{figure}[ht!]
\begin{center}
\subfigure[DoFs and normals of triangle $K$]{\scalebox{1}{ \begin{tikzpicture}[>=stealth, scale=1.5]

    % --- Coordonnées des sommets du triangle ---
    \coordinate (S1) at (0,0);
    \coordinate (S2) at (4,0.6);
    \coordinate (S3) at (2.2,4);

    % --- Calcul des milieux des arêtes ---
    \coordinate (S4) at ($(S1)!0.5!(S2)$);
    \coordinate (S5) at ($(S2)!0.5!(S3)$);
    \coordinate (S6) at ($(S3)!0.5!(S1)$);

    % --- Calcul du centroïde (centre de gravité) ---
    \coordinate (C) at (barycentric cs:S1=1,S2=1,S3=1);

    % --- Dessin de la géométrie ---
    \draw[thick] (S1) -- (S2) -- (S3) -- cycle; % Triangle
    \draw[dashed, thick] (S1) -- (S5);         % Trait pointillé entre sigma_1 et sigma_5

    % --- Tracé et étiquetage des points (nœuds) ---
    \fill (S1) circle (2pt) node[below left] {$\sigma_1$};
    \fill (S2) circle (2pt) node[below right] {$\sigma_2$};
    \fill (S3) circle (2pt) node[above] {$\sigma_3$};
    \fill (S4) circle (2pt) node[below left] {$\sigma_4$};
    \fill (S5) circle (2pt) node[right=3pt] {$\sigma_5$};
    \fill (S6) circle (2pt) node[left=3pt] {$\sigma_6$};
    \fill (C) circle (3pt) node[above=4pt] {\small centroid};

    % --- Vecteur -n1 (part de sigma_5 et pointe vers l'extérieur) ---
\draw[->, thick] (S5) -- ($ (S5) + 0.3*(S5) - 0.3*(S1) $) node[midway, above right, yshift=14pt] {$\mathbf{n}_{1}$};

    % --- Vecteur n6 (partant de sigma_4, pointant vers l'extérieur) ---
    \draw[->, thick] (S4) -- +(0.1, -0.7) node[right] {$\mathbf{n}_6$};

\end{tikzpicture}}}\hspace{0.5cm}
\subfigure[Geometry for the first-order scheme]{\scalebox{1}{ \begin{tikzpicture}[>=stealth, scale=1.5]

    % --- Coordonnées des sommets du triangle ---
    \coordinate (S1) at (0,0);
    \coordinate (S2) at (4,0.6);
    \coordinate (S3) at (2.2,4);

    % --- Calcul des milieux des arêtes ---
    \coordinate (S4) at ($(S1)!0.5!(S2)$);
    \coordinate (S5) at ($(S2)!0.5!(S3)$);
    \coordinate (S6) at ($(S3)!0.5!(S1)$);

    % --- Calcul du centroïde (centre de gravité) ---
    \coordinate (C) at (barycentric cs:S1=1,S2=1,S3=1);

    % --- Remplissage de la zone jaune (sous-triangle T_i^K) ---
    \fill[yellow] (S1) -- (S4) -- (C) -- cycle;

    % --- Dessin de la géométrie principale ---
    \draw[thick] (S1) -- (S2) -- (S3) -- cycle; % Triangle extérieur

    % --- Lignes pointillées internes ---
    \draw[dashed, thick] (C) -- (S1);
    \draw[dashed, thick] (C) -- (S2);
    \draw[dashed, thick] (C) -- (S3);
    \draw[dashed, thick] (C) -- (S4);
    \draw[dashed, thick] (C) -- (S5);
    \draw[dashed, thick] (C) -- (S6);

    % --- Tracé et étiquetage des nœuds ---
    \fill (S1) circle (2.5pt) node[below left] {$\sigma_1$};
    \fill (S2) circle (2.5pt) node[below right] {$\sigma_2$};
    \fill (S3) circle (2.5pt) node[above] {$\sigma_3$};
    \fill (S4) circle (2.5pt) node[below] {$\sigma_4$};
    \fill (S5) circle (2.5pt) node[above right] {$\sigma_5$};
    \fill (S6) circle (2.5pt) node[above left] {$\sigma_6$};
    
    % Centroïde (forme hexagonale simulée par un cercle épais ou nœud)
    \fill (C) circle (3.5pt) node[above=5pt] {\small centroid};

    % --- Étiquette de la zone jaune ---
    \node at ($(S1)!0.5!(C) + (0.5,-0.2)$) {$T_i^K$};

\end{tikzpicture}}}
\caption{\label{fig:1}Geometry notations.} %(a): DoFs and normals of triangle $K$, (b): Geometry for the first-order scheme. 
\end{center}
\end{figure}

In \eqref{scheme:pt}, the residual $\bbPhi_\bsigma^K$ is defined as
\begin{equation}
\label{scheme:pt:blend}\bbPhi_\bsigma^K=\ell_\bsigma \bbPhi_\bsigma^{HO,K}+(1-\ell_\bsigma)\bbPhi_{\bsigma}^{LO,K}
\end{equation}
The parameter $\ell_\sigma$ is defined as in \cite{BP_Pampa_VEM} or \cite{OEPampa}, or to satisfy an entropy inequality. This will be discussed in details later in the text.

We need to specify the high order and low order residuals in \eqref{scheme:pt:blend}. The high order residual $\bbPhi_\bsigma^{HO,K}$ is defined as:
\begin{equation}\label{scheme:pt:HO}
\bbPhi_\bsigma^{HO,K}=\bbN_\bsigma \big (\big ( \bbK_\bsigma\big )^++\varepsilon_\bsigma^K\Id\big ) \bbK_\bsigma \cdot\nabla\bbu_h\yolo{(\bbx_\bsigma)}.\end{equation}
Here,  we have set $\bbK_\bsigma=\nabla\bbf(\bbu_\bsigma)\cdot \bbn_\bsigma^K$, $\varepsilon_\bsigma^K>0$. The normal $\bbn_\bsigma^K$ is defined as the sum of the outward normals of the two segments that share $\bsigma$, see Figure \ref{fig:1}-(a).  Since $\bbK_\bsigma$ is diagonalisable in $\R$, we can take its positive part. The parameter $\varepsilon_\bsigma^K$ is set to $\vert K\vert/2$\footnote{but what really matters is that it is strictly positive.}, and 
$$\bbN_\bsigma^{-1}=\sum_{K, \bsigma\in K}\big (\big ( \bbK_\bsigma\big )^++\varepsilon_\bsigma^K\Id\big )$$ which is invertible if \eqref{eq:1} is endowed with an entropy, see \cite{BP_Pampa_VEM,OEPampa} for details.

For the low order residual we use the notations of Figure \ref{fig:1}-(b). The average value is identified to the value of $\bbu$ at the centroid displayed as $\bsigma_7$.  From $\{\bsigma_1, \ldots, \bsigma_7\}$ we construct $6$ sub-triangles denoted by $T^K_i$, $i=1, \ldots, 6$. One boundary vertex belongs to two sub-triangles. We define
$\bbPhi_{K,\sigma}^{\text{LO}}$ as
\begin{equation}\label{scheme:pt:LO1}
\bbPhi_{K,\sigma}^{\text{LO}}=\frac{1}{\vert C_\sigma\vert }\sum\limits_{T^K_i, \bsigma\in T^K_i}\widehat{\bbPhi}_{\bsigma,T^K_i}^{LO}(\bbu),
\end{equation}
where $$\vert C_\sigma\vert=\sum\limits_{K, \sigma\in K}\sum\limits_{T^K_i, \bsigma\in T^K_i} \vert T^K_i\vert, \quad \vert T_i^K\vert=\frac{\vert K\vert}{6},$$ is the measure of the dual control volume and
\begin{equation}\label{scheme:pt:LO2}
\begin{split}
   \widehat{\bbPhi}_{\bsigma,T^K_i}^{LO}&=\frac{1}{3}\sum_{\bsigma_l}\yolo{\frac{1}{2}}\bbf(\bbu_{\bsigma_l})\cdot \bbn_{\bsigma_l}^{T_i^K}+ \alpha_{T^K_i}\big (\bbu_\sigma-\overline{\overline{\bbu}}_{T^K_i}\big )\\
  \overline{\overline{\bbu}}_{T^K_i}&=\frac{1}{3}\sum_{\bsigma_l\in T^K_i}\bbu_{\bsigma_l}.
 \end{split}
\end{equation}
The vector $\bbn_{\bsigma_l}^{T^K_i}$ in \eqref{scheme:pt:LO2}  is the scaled outward normal of the edge of $T_i^K$ opposite to $\bsigma_l$. 
We can also write \eqref{scheme:pt:LO2} as
\begin{equation}
\label{Theta}\widehat{\bbPhi}_{\bsigma,T^K_i}^{LO}=\frac{1}{3}
\sum_{\bsigma_l\in T_i^K} \bigg ( \yolo{\frac{1}{2}}\big (\bbf(\bbu_{\bsigma_l})-{\bbf(\bbu_{\bsigma})}\big )
\cdot \bbn_{\bsigma_l}^{T^K_i}
  +\alpha_{T^K_i}\big (\bbu_\sigma-\bbu_{\bsigma_l}\big )\bigg ).
  \end{equation}
 \end{itemize}
\end{subequations}

\bigskip
The relations \eqref{scheme:ave} and \eqref{scheme:pt} are discretised in time by using a SPP Runge-Kutta scheme. This enables to approximate the solution of \eqref{eq:1} in $\R^d$ at any $t_n$ defined by $t_0=0$ and $t_{l+1}=t_{l}+\Delta t_l$.  Using SSP Runge-Kutta temporal discretisation allows to concentrate only on the Euler forward discretisation of \eqref{scheme:ave} and \eqref{scheme:pt}.

Next, we  show a formal truncation error analysis, and using this algebra, we can show a Lax-Wendroff like theorem. This allows to give precise structure conditions on the scheme.
%%%%%%%%%%%%%%%%%%
\subsection{Truncation error analysis}\label{sec:Truncation_error}
We consider the following hyperbolic problem  in $\R^d$
\begin{subequations}
\label{appendix:eq:1}
\begin{equation}
\label{appendix:eq:1:1}
\dpar{\bbu}{t}+\text{div }{\bbf(\bbu)}=0, \qquad \bbx\in \R^d
\end{equation}
with the initial condition 
\begin{equation}
\label{appendix:eq:1:2}
\bbu(x,0)=\bbu_0(x), \qquad \bbx\in \R^d.
\end{equation}
\end{subequations}

\yolo{We consider the problem in \eqref{eq:1} with $\bbu\in \mathcal{D}\subset \R^q$.} 
We want to establish a truncation error formula, i.e. some approximation of, for any test function, 
$$\mathcal{E}(\varphi, \bbu_h)=\int_{\R^d}\varphi(\bbx,t)\bigg ( \dpar{\bbu_{h}}{t}+\text{div }{\bbf(\bbu_{ h})}\bigg )\; d\bbx,$$
where $\bbu_{{h}}$ is some interpolant of the exact solution, and see a formal condition of the form
$$\Vert \mathcal{E}(\varphi, \bbu_h)\Vert \leq C(\Vert \bbu_h\Vert)\; \Vert \varphi\Vert\times h^\alpha$$
where $h$ is a measurement of the mesh size and $\Vert~.~\Vert$ some norm on $C^1_0(\R^d)$.

We assume $d=2$ to fix ideas, the case $d=3$ would be done the same. We consider a triangulation of $\R^2$, made of triangles. The triangles are denoted generically by $K$. The vertices of the triangles are denoted by $\bba_j$, and the degrees of freedom (hence including these vertices) by $\bsigma_l$. The centroid of $K$ is $\bbx_K$. We look at the quadratic case, the extension to higher degree is natural.

In the particular case of the approximation we consider, we know that
\begin{subequations}\label{appendix:simpson}
\begin{equation}
\label{appendix:simpson:1}
\frac{1}{\vert K\vert}\int_K \bbu_{ h}\; d\bbx=\xbar\bbu_K=
\sum_{j=1}^6 \beta_j\bbu_{ h}(\bsigma_j)+\beta_7 \bbu_{ h}(\bbx_K)
\end{equation}
where $\beta_j=\yolo{\tfrac{1}{20}}$ for $j=1,2,3$, $\yolo{\beta_j=\tfrac{2}{15}}$ for $j=4,5,6$ and $\yolo{\beta_7=\tfrac{9}{20}>0}$. We also have
$$\sum_{j=1}^7 \beta_j=1.$$ This is  a Simpson-like formula from which we get
\begin{equation}\label{appendix:simpson:2}
\bbu_{ h}(\bbx_K)=\theta_7\xbar \bbu_K +\sum_{j=1}^6\theta_j \bbu_{ h}(\bsigma_j)
\end{equation}
\end{subequations}
with $\theta_j=-\tfrac{1}{9}$ for $j=1,2,3$, $\theta_j=-\tfrac{8}{27}$, $j=4,5,6$ and,  $\theta_7=\tfrac{20}{9}$. We also have $$\sum_{j=1}^7 \theta_j=1.$$

Taking $\varphi\in C^1(\R^d)$, we can write
$$
\int_K\varphi \bbv\;  d\bbx=\vert K\vert \bigg (
 \sum_{j=1}^6\beta_j\varphi(\bsigma_j) \bbv(\bsigma_j)+\beta_7 \varphi(\bbx_K)\bbv(\bbx_K)\bigg )+\vert K\vert\;{O(h^3)}
$$
since the formula is exact for quadratic functions.
In the following, we set
$$\II(\varphi, \bbv, K)=
 \sum_{j=1}^6\beta_j\varphi(\bsigma_j) \bbv(\bsigma_j)+\beta_7\varphi(\bbx_K)\bbv(\bbx_K).$$
 \begin{remark}[About the Simpson-like formula]
 The formula \eqref{appendix:simpson:1} with positive weights can be shown to be true for cubic approximation and VEM (quadratic) approximation, see \cite{pampa_dg} for details.
 \end{remark}
 The semi-discrete scheme for quadratics writes
 \begin{subequations}
\label{appendix:scheme}
\begin{equation}
\label{appendix:scheme:1}
 \dfrac{ d\xbar \bbu_{K}}{ dt}+  \frac{1}{\vert K\vert }\oint_{\partial K}\hbbf_\bbn\;  d\ell  =0
\end{equation}
and for $\bbu_\bsigma$, we assume a semi-discrete scheme of the following form:
\begin{equation}
\label{appendix:scheme:3}
 \dfrac{ d\bbu_\bsigma}{ dt}+ \sum_{K,\bsigma\in K}\bomega_{K,\bsigma} \bbPhi_{K,\bsigma}(\bbu_{ h})=0,
\end{equation} 
The scalar/matrices $\bomega_{K,\bsigma}(\bbu)$ satisfy the assumptions of \eqref{bomega}.
\end{subequations}
\begin{remark}
In \eqref{appendix:scheme:1} we have used the  numerical flux $\hbbf_\bbn$ in the direction $\bbn$ instead of the continuous one, $\bbf(\bbu_{ h})\cdot \bbn$ to account for the possibility of having  a flux that is blended between the continuous one and a first order one. The same will be done for the residuals $\bbPhi_\bsigma^K$, we do not need to write it explicitly.
\end{remark}
Using \eqref{appendix:simpson:1} we see that
$$
\II(\varphi, \bbv, K)=\varphi(\bbx_K) \xbar \bbv_K+ \sum_{j=1}^6\beta_j\big ( \varphi(\bsigma_j)-\varphi(\bbx_K)\big )\bbv(\bsigma_j).$$
The matrices/scalars $\bomega_{K,\bsigma}$ satisfy
\begin{equation}\label{bomega}\sum_{K, \bsigma\in K}\bomega_{K,\bsigma}=\Id \text{ and }\Vert \bomega_{K,\bsigma}\Vert \leq C
\end{equation}
for some $C$ independent of $K$, of the degree of freedom in $K$ and the class of mesh, it may only  depend  on some norm of $\bbu_{ h}$, for example the $L^\infty$ norm.

We use this on the scheme and get for each $K$
$$\II(\varphi, \dfrac{ d\bbu_{ h}}{ dt}, K)=\varphi(\bbx_K) \dfrac{ d\xbar\bbu_K}{ dt}+ \sum_{j=1}^6\beta_j\big ( \varphi(\bsigma_j)-\varphi(\bbx_K)\big )\dfrac{ d\bbu_{ h}}{ dt}(\bsigma_j),$$
and then insert the schemes to obtain
\begin{equation*}
    \begin{aligned}
\II(\varphi, \dfrac{ d\bbu_{ h}}{ dt}, K)&=
-\varphi(\bbx_K) \yolo{\frac{1}{\vert K\vert}}\oint_{\partial K}\hbbf_\bbn\;  d\ell\\
&-\sum_{j=1}^6\beta_j\big ( \varphi(\bsigma_j)-\varphi(\bbx_K)\big )\bigg (\sum_{K, \bsigma_j\in K}\bomega_{K,\bsigma_j}\bbPhi_{K,\bsigma_j}(\bbu_{ h})\bigg ).
    \end{aligned}
\end{equation*}
So that, using \eqref{appendix:scheme},  and since \eqref{appendix:scheme:3}, 
we get
\begin{equation}\label{appendix:schema:bis}
\begin{split}
\sum_K  \varphi(\bbx_K)\bigg (& \vert K\vert\dfrac{ d\xbar\bbu_K}{ dt}+ \oint_{\partial K}\hbbf_\bbn\;  d\ell\bigg )\\+
&\sum_K \vert K\vert \sum_{j=1}^6 \beta_j\big ( \varphi(\bsigma_j)-\varphi(\bbx_K)\big )\bomega_{K,\bsigma_j}\bigg (\dfrac{ d\bbu_{ h}}{ dt}(\bsigma_j)+
\bbPhi_{K,\bsigma_j}(\bbu_{ h})\bigg)=0
\end{split}
\end{equation}
The relation \eqref{appendix:schema:bis} is a \emph{rewriting} of the original scheme, tested on any test function $\varphi$.

The (weak) truncation error of the scheme is
\begin{equation}
\label{appendix:weak:TE}
\begin{split}
\varepsilon(\varphi, \bbu^{{ex}}):=
&\sum_K \vert K\vert \varphi(\bbx_K)\bigg ( \dfrac{ d\xbar{(\pi \bbu^{{ex}})}}{ dt}+\frac{\oint_{\partial K}\hbbf_\bbn\; d\ell}{\vert K\vert }\bigg ) \\
&+
\sum_K\vert K\vert \sum_{j=1}^6 {\beta_j}  \big ( \varphi(\bsigma_j)-\varphi(\bbx_K)\big ){\bomega_{K,\bsigma_j}}\bigg ( \dfrac{ d\yolo{\pi\bbu^{{ex}}}}{ dt}(\bsigma_j)+\bbPhi_{K,\bsigma_j}(\pi\bbu^{{ex}})\bigg )
\end{split}
\end{equation}
where 
$$\pi\bbu^{{ex}}=\sum_{j=1}^6 \bbu^{{ex}}(\bsigma_j)\yolo{\psi}_{\bsigma_j}+\xbar \bbu^{{ex}}\yolo{b}.$$
From \eqref{appendix:weak:TE}, we see that to get $\varepsilon(\varphi, \bbu^{{ex}})=O(h^3)$ which is the expected behavior, it is enough to have
$$ \dfrac{ d\xbar{(\pi \bbu^{{ex}})}}{ dt}+\frac{\oint_{\partial K}\hbbf_\bbn\; d\ell}{\vert K\vert }=O(h^3)
\text{ and }
\dfrac{ d\yolo{\pi\bbu^{{ex}}}}{ dt}(\bsigma_j)+\bbPhi_{K,\bsigma_j}(\pi\bbu^{{ex}})=O(h^2)$$ because $\varphi(\bsigma_j)-\varphi(\bbx_K)=O(h)$.

Indeed, we first have that, writing $e=\pi\bbu^{{ex}}-\bbu^{{ex}}$, 
\begin{equation*}
\begin{split}\varepsilon(\varphi, \bbu^{{ex}})&=
\sum_K\varphi(\bbx_K) \bigg (\vert K\vert  \dfrac{ d\xbar e_K}{ dt}+\oint_{\partial K}\big (\hbbf_\bbn-\bbf(\yolo{\bbu^{ {ex}}})\cdot \bbn\big )\; d\ell\big ) \\
&\hspace{-3.5em}+
\sum_K\vert K\vert \sum_{j=1}^6 {\beta_j\bomega_{K,\bsigma_j}}  \big ( \varphi(\bsigma_j)-\varphi(\bbx_K)\big )\bigg (\underbrace{ \dfrac{ de_{\bsigma_j}}{ dt}+\big (\bbPhi_{K,\bsigma_j}\yolo{(\pi\bbu^{ex})}-\bbPhi_{K,\bsigma_j}\yolo{(\bbu^{ {ex}})}\big )}_{(II)}\yolo{+\dfrac{ d\bbu^{ex}}{ dt}(\bsigma_j)+\bbPhi_{K,\bsigma_j}(\bbu^{ {ex}})}\bigg ).
\end{split}
\end{equation*}
Now by taking \yolo{$\hbbf_\bbn$ and} $\bbPhi_{K,\sigma_j}$ as in \eqref{scheme:ave}-\eqref{scheme:pt} with $\ell_e=\ell_\bsigma=1$, 
since $\Vert e\Vert_{\infty}\leq C\; h^3$ where the constant $C$ only depends on the $L^\infty$ norm of $\bbu_{ {ex}}$ and its derivatives, we see that 
$\frac{ d\bar{e}_K}{ dt}$ is $O(h^3)$, while, if $\hbbf_\bbn-\bbf(\bbu^{ex})\cdot \bbn=O(h^3)$, we get
\begin{equation*}
\begin{split}\sum_K \varphi(\bbx_K)&\oint_{\partial K}\big (\hbbf_\bbn-\bbf(\yolo{\bbu^{ {ex}}})\cdot \bbn\big )\; d\ell\\
&=\yolo{\sum_K}\sum_{\text{edges} }\big (\varphi(\bbx_K)-\varphi(\bbx_{K'})\big )\int_e\big (\hbbf_\bbn-\bbf(\yolo{\bbu^{ {ex}}})\cdot \bbn\big )\; d\ell\yolo{=\sum_K\sum_{\text{edges}}\vert e\vert O(h^4)}=\yolo{\sum_K}\vert K\vert O(h^{3})
\end{split}
\end{equation*}
since $\varphi(\bbx_K)-\varphi(\bbx_{K'})=O(h)$, so that $\vert e\vert (\varphi(\bbx_K)-\varphi(\bbx_{K'}) =O(h^2)=\vert K\vert O(1)$ for a regular mesh. This leads to  
$$\sum_K\varphi(\bbx_K) \bigg (\vert K\vert  \dfrac{ d\xbar e_K}{ dt}+\oint_{\partial K}\big (\hbbf_\bbn-\bbf(\yolo{\bbu^{ {ex}}})\cdot \bbn\big )\; d\ell\bigg ) =O(h^3).$$
The term \yolo{$\dfrac{ d\bbu^{ex}}{ dt}(\bsigma_j)+\bbPhi_{K,\bsigma_j}(\bbu^{ {ex}})$ is $O(h^2)$} and $(II)$ is also only $O(h^2)$ because $\bbPhi_{K,\sigma_j}(\pi\bbu^{ex})-\bbPhi_{K,\sigma_j}(\bbu^{ {ex}})=O(h^2)$ but since $\varphi(\bsigma_j)-\varphi(\bbx_K)=O(h)$, we see that in the end
$$\sum_K\vert K\vert \sum_{j=1}^6 {\beta_j\bomega_{K,\bsigma_j}}  \big ( \varphi(\bsigma_j)-\varphi(\bbx_K)\big )\bigg (\dfrac{ de_{\bsigma_j}}{ dt}+\big (\bbPhi_{K,\bsigma_j}{(\pi\bbu^{ex})}-\bbPhi_{K,\bsigma_j}{(\bbu^{ {ex}})}\big )+\dfrac{ d\bbu^{ex}}{ dt}(\bsigma_j)+\bbPhi_{K,\bsigma_j}(\bbu^{ {ex}})\bigg )=O(h^3),$$
so that $\varepsilon(\varphi, \bbu_{{ex}})=O(h^3)$ for regular meshes.

\begin{remark} This is not specific to two dimensional problems, because we have used that $\vert K\vert =O(h^2)$ and $\vert e\vert=O(h)$, that is in general $\vert K\vert =O(h^d)$ and $\vert e\vert =O(h^{d-1})$ for a regular mesh.
\end{remark}
%%%%%%%%%%%%%

\subsection{Lax-Wendroff like theorem}
Here we consider a fully discrete version of the scheme, where, for simplicity, the temporal scheme is the Euler forward scheme \yolo{from $t_n$ to $t_{n+1}$}.
The quantity of interest is now the fully discrete analog of \eqref{appendix:weak:TE}:
\begin{equation}
\begin{split}
\sum_{n=0}^\infty \Delta t \Bigg [ \sum_K  \varphi(\bbx_K, t_n)\bigg (& \vert K\vert\dfrac{\xbar\bbu_K^{n+1}-\xbar\bbu_K^n}{\Delta t}+ \oint_{\partial K}\hbbf_\bbn\;  d\ell\bigg )\\+
&\sum_K \vert K\vert \sum_{j=1}^6 \beta_j\big ( \varphi(\bsigma_j, t_n)-\varphi(\bbx_K,t_n)\big )\bomega^n_{K,\bsigma_j}\bigg (\dfrac{\bbu_{\bsigma_j}^{n+1}-\bbu_{\bsigma_j}^n}{\Delta t}+
\bbPhi_{K,\bsigma_j}(\bbu_h^n)\bigg)\Bigg ]=0
\end{split}
\end{equation}
This is the sum of two terms. The first one is, choosing one orientation of the edges/faces of the mesh, writes as
\begin{equation*}
\begin{split}
I&=\sum_{n=0}^\infty \Delta t\sum_K  \varphi(\bbx_K, t_n)\bigg ( \vert K\vert\dfrac{\xbar\bbu_K^{n+1}-\xbar\bbu_K^n}{\Delta t}+ \oint_{\partial K}\hbbf_\bbn\;  d\ell\bigg )\\
&=\yolo{-}\sum_{n=1}^\infty \Delta t\sum_K\vert K\vert \frac{\varphi(\bbx_K, t_{n+1})-\varphi(\bbx_K, t_n)}{\Delta t}\xbar\bbu_K^{n+1}\\
&\qquad+\sum_{\yolo{n=0}}^\infty\yolo{\Delta t} \sum_{e\subset\text{ edge}}\big ( \varphi(\bbx_K, t_n)-\varphi(\bbx_{K_{e,-}}, t_n\big ) \int_e\hbbf_\bbn\;  d\ell\\
& \qquad\qquad -\sum_{K} \yolo{\vert K\vert}\varphi(\bbx_K, 0)\xbar\bbu_K^0
\end{split}
\end{equation*}
The second one is
\begin{equation*}
\begin{split}
II&=\sum_{n=0}^\infty \Delta t\sum_K \vert K\vert \sum_{j=1}^6 \beta_j\big ( \varphi(\bsigma_j, t_n)-\varphi(\bbx_K,t_n)\big )\bomega^n_{K,\bsigma_j}\bigg (\dfrac{\bbu_{\bsigma_j}^{n+1}-\bbu_{\bsigma_j}^n}{\Delta t}+
\bbPhi_{K,\bsigma_j}(\bbu_h^n)\bigg)
\end{split}
\end{equation*}
We have the following result:
\begin{proposition}\label{prop:LxW:AF}
We consider the sequence defined by the scheme and assume that:
\begin{enumerate}
\item The numerical flux $\hbbf$ is consistant and Lipschitz continuous,
\item The residuals $\bbPhi_{K,\bsigma}$ satisfy
\begin{equation}\label{ConsistantceResAF}\Vert \bbPhi_{K,\bsigma}\Vert \leq C\; \frac{\vert \partial K\vert}{\vert K\vert }\bigg (\sum_{\bsigma, \bsigma'\in K}\Vert \bbu_\bsigma-\bbu_{\bsigma'}\Vert +\sum_{\bsigma\in K}\Vert \bbu_\bsigma-\xbar{\bbu}_K\Vert \bigg )\end{equation}
where $C$ is independent of the mesh and $\bbu$,
\item The family of scalar/matrices $\bomega_{K,\bsigma}$ is bounded independently of the mesh by a constant that depends only on $\Vert \bbu\Vert_{L^\infty(\R^d\times \R^+)}$,
\item The sequence $\{ \bbu_\bsigma^n\}$ and $\{\xbar{\bbu}_K^n\}$ are bounded in $L^\infty$,
\item There exists a function of $\bbv\in L^2(\R^d\times \R+)$ such that the sequence $\bbu_\Delta$ defined as
$$\bbu_\Delta(\bbx, s)=\sum_{\bsigma}\bbu_\bsigma^n\varphi_\bsigma+\sum_K\xbar{\bbu}_K^n\yolo{b}, \qquad s\in [t_n, t_{n+1}[$$
converges to $\bbv$ in $L^2$.
\item The sequence of meshes is regular and for a mesh $\TT_h$, the  maximum of the diameters of the elements of $\TT_h$, denoted by $h$, tends to $0$. In other words, the ratio of the inner circles (sphere) and outer circles (sphere) over all the mesh elements is bounded from bellow by $\alpha>0$ independent of the mesh, and $h$ tends to $0$.
\end{enumerate}
Then $\bbv$ is a weak solution of the problem \eqref{appendix:scheme}.
\end{proposition}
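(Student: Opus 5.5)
The approach is to take the identity $I+II=0$ written just before the statement, with a smooth compactly supported test function $\varphi\in C^1_0(\R^d\times\R^+)$, and show that as $h,\Delta t\to 0$ we have $II\to 0$ while $I$ converges to
$$-\int_{\R^+}\int_{\R^d}\Big(\bbv\,\dpar{\varphi}{t}+\bbf(\bbv)\cdot\nabla\varphi\Big)\,d\bbx\,dt-\int_{\R^d}\varphi(\bbx,0)\bbu_0\,d\bbx.$$
That limit says exactly that $\bbv$ is a weak solution. The structure follows the classical Lax--Wendroff argument for finite volumes applied to the average-value part $I$. The point-value part $II$ is treated as a perturbation, since every term carries the small factor $\varphi(\bsigma_j,t_n)-\varphi(\bbx_K,t_n)=O(h)$.

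\textbf{Term $II$.} Replace $(\bbu^{n+1}_{\bsigma_j}-\bbu^n_{\bsigma_j})/\Delta t$ using the scheme \eqref{appendix:scheme:3}. Then $II$ is a sum of terms of the form
$$\Delta t\,|K|\,\beta_j\big(\varphi(\bsigma_j)-\varphi(\bbx_K)\big)\,\bomega\,\bbPhi_{K',\bsigma_j}.$$
Alternatively, and more robustly, do a discrete summation by parts in time on the $\bbu^{n+1}-\bbu^n$ part, which moves the difference onto $\varphi$ and gains a factor $\Delta t$. For the residual part, use assumption \eqref{ConsistantceResAF}. With $|\partial K|/|K|=O(h^{-1})$ on a regular mesh, each term is bounded by
$$C\,\Delta t\,|K|\,\Vert\nabla\varphi\Vert_\infty\Big(\sum\Vert\bbu_\bsigma-\bbu_{\bsigma'}\Vert+\sum\Vert\bbu_\bsigma-\xbar\bbu_K\Vert\Big).$$
The factor $h$ from $\varphi$ cancels the $h^{-1}$. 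By Cauchy--Schwarz over the bounded space--time support of $\varphi$, it then suffices to show that $\sum_n\Delta t\sum_K|K|\,\big(\text{local oscillation of } \bbu_h^n \text{ on } K\big)^2\to 0$. This follows from the strong $L^2$ convergence of $\bbu_\Delta$ to $\bbv$ (assumption 5), using uniform boundedness (assumption 4) and the standard fact that $L^2$-convergent sequences have vanishing local oscillations: compare $\bbu_\Delta$ with its translates, and use that a fixed finite-dimensional space on a reference element has equivalent norms.

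\textbf{Term $I$.} I would handle $I$ exactly as in the finite-volume Lax--Wendroff proof.
\begin{itemize}
\item The time-difference part converges to $-\iint\bbv\,\partial_t\varphi$. This uses $L^2$ convergence, the smoothness of $\varphi$, and the fact that $\xbar\bbu_K^n$ is the average of $\bbu_\Delta$ on $K$ (the $b$ and $\varphi_\bsigma$ parts average correctly).
\item The initial term converges to $-\int\varphi(\cdot,0)\bbu_0$.
\item For the edge term, write $\varphi(\bbx_K)-\varphi(\bbx_{K_{e,-}})$ as $-\int_K\nabla\varphi\,d\bbx$ reconstructed through the edge sums. Concretely, compare it with $\sum_K\sum_{e}\int_e\bbf(\xbar\bbu_K)\cdot\bbn\,\varphi$, which equals $\sum_K\int_K\bbf(\xbar\bbu_K)\cdot\nabla\varphi$ up to $O(h)$ errors.
\item The difference is controlled by consistency and Lipschitz continuity of $\hbbf$ (assumption 1): $|\hbbf_\bbn-\bbf(\xbar\bbu_K)\cdot\bbn|\le C$ times the local oscillation of the neighbouring averages and the point values on $K$.
\item Each edge error term has size $|e|\cdot O(h)\cdot\text{osc}=|K|\cdot\text{osc}$, which vanishes by the same oscillation argument as for $II$.
\item Finally, $\sum_K\int_K\bbf(\xbar\bbu_K)\cdot\nabla\varphi\to\iint\bbf(\bbv)\cdot\nabla\varphi$, using $L^2$ convergence and the Lipschitz continuity of $\bbf$ on the bounded set given by assumption 4.
\end{itemize}

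\textbf{Main obstacle.} I expect the hardest step to be the oscillation estimate, namely showing from assumptions 4 and 5 alone that
$$\sum_n\Delta t\sum_K|K|\Big(\sum_{\bsigma,\bsigma'\in K}\Vert\bbu^n_\bsigma-\bbu^n_{\bsigma'}\Vert+\sum_{\bsigma\in K}\Vert\bbu^n_\bsigma-\xbar\bbu^n_K\Vert\Big)\to 0$$
on compact sets. My plan is to bound this quantity by $\Vert\bbu_\Delta-\bbu_\Delta(\cdot+\bm\delta)\Vert_{L^1_{loc}}$ for shifts $|\bm\delta|\lesssim h$, using norm equivalence on $V(K)$ and mesh regularity. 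I would then invoke the Kolmogorov--Riesz characterisation: a convergent sequence in $L^2_{loc}$ is uniformly $L^2$-equicontinuous under translations. Once this lemma is in place, both $I$ and $II$ reduce to routine bookkeeping.
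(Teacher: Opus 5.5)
Your main line is correct and follows the paper's architecture. $I$ is handled by the classical finite-volume Lax--Wendroff argument, which the paper only cites from Raviart and which you spell out, correctly reducing the flux-consistency error to $\vert K\vert$ times local oscillations. $II\to0$ because $\varphi(\bsigma_j,t_n)-\varphi(\bbx_K,t_n)=O(h_K)$ cancels the $\vert\partial K\vert/\vert K\vert=O(h_K^{-1})$ in \eqref{ConsistantceResAF}. Everything then rests on an oscillation lemma deduced from strong $L^2$ convergence, the paper's Lemma~\ref{sec:rd:lemme1}.

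The genuine difference is the time-difference part of $II$. The paper bounds it by $h\sum_n\sum_K\vert K\vert\sum_{\bsigma}\Vert\bbu^{n+1}_\bsigma-\bbu^n_\bsigma\Vert$ and appeals to a time-translation estimate. Converting $h\sum_n$ into $\int dt$ there actually requires $h\lesssim\Delta t$; the appendix invokes $\Delta t\le Ch$, which is the opposite inequality. Your first route substitutes $\bbu^{n+1}_\bsigma-\bbu^n_\bsigma=-\Delta t\sum_{K'}\bomega^n_{K',\bsigma}\bbPhi_{K',\bsigma}(\bbu^n_h)$. This produces the factor $\Delta t$ explicitly and leaves only spatial oscillations, so you need neither a time-translation estimate nor any lower bound on $\Delta t/h$. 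The only extra ingredients are that neighbouring elements have comparable diameters and that each degree of freedom lies in boundedly many elements, both of which follow from shape regularity.

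Drop the ``more robust'' alternative: summation by parts in time does not work under assumption 3. The coefficient of $\bbu^{n+1}_{\bsigma_j}-\bbu^n_{\bsigma_j}$ is $\big(\varphi(\bsigma_j,t_n)-\varphi(\bbx_K,t_n)\big)\bomega^n_{K,\bsigma_j}$, and $\bomega^n$ depends on $\bbu^n$ (for the actual scheme, through $\bbN_\bsigma$ and $(\bbK_\bsigma)^+$). So the difference also falls on $\bomega^n-\bomega^{n-1}$. The resulting sum $\sum_n\sum_K\vert K\vert\,O(h)\,\Vert\bomega^n_{K,\bsigma_j}-\bomega^{n-1}_{K,\bsigma_j}\Vert$ carries no $\Delta t$ and is only $O(h/\Delta t)$, because $\bomega$ is assumed bounded but not regular in time.

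Two smaller points. First, $\xbar{\bbu}^n_K$ is the mean of $\bbu_\Delta$ on $K$ only if assumption 5 is read with $\psi_\bsigma$ instead of the Lagrange $\varphi_\bsigma$ as literally written; a midpoint $\varphi_\bsigma$ has mean $1/3$. Reading it with $\psi_\bsigma$ is clearly what is intended. Second, each bracket of $I$ vanishes by the average update, so $I=0$ exactly and the weak formulation already follows from the limit of $I$. Closing the argument this way is safer: with the single-element residual $\bbPhi_{K,\bsigma_j}$ in its bracket, $II$ is not identically zero, so $I+II=0$ should not be used as an exact identity. Your estimate $II\to0$ is true but logically unnecessary; the point values enter only through the edge fluxes inside $I$, which your oscillation lemma already controls.
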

\begin{proof}
Using classical arguments, for example see \cite{raviart}, we see that $(I)$ converges to
$$-\int_{\R^d}\varphi(\bbx,0) \bbu_0(\bbx)\; d\bbx\yolo{-}\int_{0}^{+\infty}\yolo{\int_{\R^d}}\dpar{\varphi}{t}(\bbx,s)\bbu(\bbx,s)+\nabla\varphi(\bbx,s)\cdot \bbf(\bbu)\; d\bbx\;\yolo{dt}.$$
The only thing to do is to show that $(II)$ will tend to $0$.
This is the purpose of the next sequence of lemma.
\end{proof}
\begin{lemma}\label{zelem}
    Under the conditions of proposition \ref{prop:LxW:AF}, we have, for all $\varphi\in C_0^1(\R^d\times \R^+)$
    $$\lim\limits_{\Delta t\rightarrow 0, h\rightarrow 0}\sum_{n=0}^\infty\sum_{K}\Delta t \vert K\vert \sum_{j=1}^6 \beta_j\big ( \varphi(\bsigma_j, t_n)-\varphi(\bbx_K,t_n)\big )\bomega^n_{K,\bsigma_j}\bigg (\dfrac{\bbu_{\bsigma_j}^{n+1}-\bbu_{\bsigma_j}^n}{\Delta t}+
\bbPhi_{K,\bsigma_j}(\bbu_h^n)\bigg)=0$$
\end{lemma}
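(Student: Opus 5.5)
We split the sum of Lemma~\ref{zelem} into a time-difference part and a residual part,
$$II=II_1+II_2,\qquad II_1=\sum_{n}\sum_K\vert K\vert \sum_{j=1}^6 \beta_j\big ( \varphi(\bsigma_j, t_n)-\varphi(\bbx_K,t_n)\big )\bomega^n_{K,\bsigma_j}\big(\bbu_{\bsigma_j}^{n+1}-\bbu_{\bsigma_j}^n\big),$$
and $II_2$ the same expression with $\Delta t\,\bbPhi_{K,\bsigma_j}(\bbu_h^n)$ in place of the time difference. Each part is bounded separately using only $\varphi\in C^1_0$, the boundedness assumptions 3--4 of Proposition~\ref{prop:LxW:AF}, the consistency estimate \eqref{ConsistantceResAF}, mesh regularity (assumption 6), and the $L^2$ convergence of $\bbu_\Delta$ (assumption 5).

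For $II_2$, regularity gives $\vert\varphi(\bsigma_j,t_n)-\varphi(\bbx_K,t_n)\vert\le \Vert\nabla\varphi\Vert_\infty h_K$ and $\vert\partial K\vert/\vert K\vert\le C/h_K$. With \eqref{ConsistantceResAF} and $\Vert\bomega\Vert\le C$, each element contributes at most
$$C\,\Delta t\,\vert K\vert\Big(\sum_{\bsigma,\bsigma'\in K}\Vert\bbu^n_\bsigma-\bbu^n_{\bsigma'}\Vert+\sum_{\bsigma\in K}\Vert\bbu^n_\bsigma-\xbar\bbu^n_K\Vert\Big),$$
summed over the $K$ and $n$ meeting the compact support of $\varphi$, say $\Omega\times[0,T]$. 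This is a discrete $L^1$ measure of oscillation of $\bbu_\Delta$ within elements. We show it tends to $0$ with the standard argument of \cite{raviart}. First, by Cauchy--Schwarz on the bounded region it is bounded by $C(\Omega,T)$ times the square root of $\sum_n\Delta t\sum_K\vert K\vert\,\Vert\bbu^n_\bsigma-\bbu^n_{\bsigma'}\Vert^2$. Second, this quadratic quantity is comparable, by norm equivalence on the finite-dimensional reference space $V(\hat K)$, to $\Vert \bbu_\Delta-\Pi_0\bbu_\Delta\Vert^2_{L^2}$, where $\Pi_0$ is the elementwise mean projection. Third, by the strong $L^2$ convergence $\bbu_\Delta\to\bbv$ we get
$$\Vert\bbu_\Delta-\Pi_0\bbu_\Delta\Vert_{L^2}\le 2\Vert\bbu_\Delta-\bbv\Vert_{L^2}+\Vert\bbv-\Pi_0\bbv\Vert_{L^2}\to0,$$
since $\Pi_0\bbv\to\bbv$ in $L^2$ as $h\to0$ by density of smooth functions. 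Hence $II_2\to0$.

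For $II_1$ we perform a discrete summation by parts in time, as in the treatment of $(I)$:
$$\sum_n a^n(\bbu^{n+1}-\bbu^n)=-\sum_{n\ge1}(a^n-a^{n-1})\bbu^n-a^0\bbu^0,\qquad a^n=\beta_j\vert K\vert\big(\varphi(\bsigma_j,t_n)-\varphi(\bbx_K,t_n)\big)\bomega^n_{K,\bsigma_j}.$$
The boundary term is $O(h)\sum_K\vert K\vert\,\Vert\bbu^0\Vert_\infty$ over the support, hence $O(h)$. For the bulk term, the factor $\varphi(\bsigma_j,t_n)-\varphi(\bbx_K,t_n)$ has time increment $O(h\Delta t)$ if $\varphi\in C^2$, or $o(\Delta t)$-type after mollification. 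Together with the $L^\infty$ bounds this again gives a contribution $O(h)$.

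The main obstacle is the time variation of $\bomega^n_{K,\bsigma}$, which is only bounded and not assumed Lipschitz in time, so $a^n-a^{n-1}$ cannot be controlled directly. We would try first to avoid summation by parts altogether and substitute the scheme itself, $\bbu^{n+1}_\bsigma-\bbu^n_\bsigma=-\Delta t\sum_{K'}\bomega^n_{K',\bsigma}\bbPhi_{K',\bsigma}$. Then $II_1$ reduces to a sum of the same form as $II_2$, with neighbouring elements $K'$ sharing $\bsigma$ and the bounded weights $\bomega^n_{K,\bsigma}\bomega^n_{K',\bsigma}$. The oscillation argument of the previous paragraph then applies verbatim on each patch, which finishes the proof.
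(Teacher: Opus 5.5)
Your proof is correct. For the time-difference part it takes a different route from the paper.

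Your $II_2$ is the paper's term $B$, handled the same way. The bound \eqref{ConsistantceResAF}, $h_K\vert\partial K\vert\le C\vert K\vert$ and the bound on $\bomega$ reduce it to $C\sum_n\Delta t\sum_K\vert K\vert$ times the intra-element oscillation, which tends to zero. The paper quotes the second half of Lemma~\ref{sec:rd:lemme1} for that limit. You prove it directly, through norm equivalence on $V(\hat K)$, the $L^2$-contractivity of $\Pi_0$ and $\Pi_0\bbv\to\bbv$, which is cleaner than the appendix argument.

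For $II_1$ (the paper's $A$), the paper never uses the update formula. It bounds $\Vert A\Vert\le C\sum_n\sum_K h\vert K\vert\sum_{\bsigma\in K}\Vert\bbu^{n+1}_\bsigma-\bbu^n_\bsigma\Vert$ and invokes the time-translation estimate of Lemma~\ref{sec:rd:lemme1}, namely $\Vert\bbu_\Delta(\cdot,\cdot+\Delta t)-\bbu_\Delta\Vert_{L^1_{loc}}\to0$, which follows from the strong $L^2$ convergence alone. That route is indifferent to how the increment $\bbu^{n+1}_\bsigma-\bbu^n_\bsigma$ is produced. However, converting $\sum_n h(\cdots)$ into $\sum_n\Delta t(\cdots)$ requires $h/\Delta t$ to stay bounded, i.e. $\Delta t\sim h$; the appendix states the condition as $\Delta t\le Ch$, which is the wrong direction for this step.

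Your substitution of the Euler-forward update turns $II_1$ into a residual sum of the same type as $II_2$. It therefore needs neither the time-translation estimate nor any relation between $\Delta t$ and $h$. The price is twofold:
\begin{itemize}
\item You rely on the explicit one-step form of the update. With a multistage scheme the increment involves residuals at intermediate stages, and hypothesis 5 does not control their oscillation.
\item You need local quasi-uniformity. To absorb $\vert K\vert\, h_K\,\vert\partial K'\vert/\vert K'\vert$ into $C\vert K'\vert$, you use $h_K\sim h_{K'}$ for elements sharing a degree of freedom, plus a uniformly bounded number of such neighbours. Both follow from assumption 6, but you should state them explicitly rather than say the argument applies ``verbatim''.
\end{itemize}
Finally, delete the summation-by-parts paragraph. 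As you note yourself, the uncontrolled variation of $\bomega^n$ in $n$ makes it unusable, and the final proof does not need it.
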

which itself needs the following lemma inspired by \cite{kroner}:
\begin{lemma} \label{sec:rd:lemme1}
Let $T>0$, $N$ be the integer part of $T/\Delta t$ and $Q\subset \R^2$ bounded. We assume a family of regular triangulations $\{\mathcal{T}_h\}$.
We consider  a sequence $\bbu_{h}$, for $h\rightarrow 0$ 
defined by: for $t\in [t_n, t_{n+1}[$,\ and $\bbx\in K\in \mathcal{T}_h$,
$$\bbu_h(\bbx,s)=\sum_{\bsigma\in K}\bbu^n_{\bsigma}\varphi_\bsigma(\bbx)+\xbar{\bbu}_K^n b (\bbx).$$ We also assume that there exists $\bbu\in L_{loc}^2(Q\times [0,T])$ and a constant $C$ independent of $h$ and $\bbu$ such that
$$\sup_h\sup_{\bbx,t}\Vert \bbu_h(\bbx,t)\Vert\leq C, \text{ and }\lim_{h}\Vert \bbu_h-\bbu\Vert=0.$$
Then
$$\lim_{h\rightarrow 0}\bigg (\sum_{n=0}^N\Delta t\sum_{K\subset Q}\sum_{\bsigma\in K}\bigg [ \Vert \bbu_\bsigma^{n+1}-\bbu_\bsigma^n\Vert +\Vert \bbu_{\bsigma}^n-\xbar{\bbu}_K^n\Vert\bigg ]\bigg )=0$$
\end{lemma}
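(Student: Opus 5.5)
We follow the classical argument of Kröner (and of the Lax--Wendroff proofs for residual distribution schemes): approximate the limit $\bbu$ by a smooth function, and use the $L^2$ convergence together with the $L^\infty$ bound to control the jumps in space and time. One point of scaling must be made first. As displayed, the sum $\sum_n\Delta t\sum_{K\subset Q}\sum_{\bsigma}\Vert\cdot\Vert$ carries no $\vert K\vert$ weight, and since the number of elements in $Q$ is $O(h^{-2})$, it could not tend to $0$ for a generic sequence. We therefore read the statement with the area weight $\vert K\vert$, as it appears in Lemma \ref{zelem}. That weighted form is exactly what is needed there, after the factor $\varphi(\bsigma_j,t_n)-\varphi(\bbx_K,t_n)=O(h)$ is combined with \eqref{ConsistantceResAF}, where $\vert\partial K\vert/\vert K\vert=O(h^{-1})$. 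The quantity to control is then
$$S_h=\sum_{n=0}^N\Delta t\sum_{K\subset Q}\vert K\vert\sum_{\bsigma\in K}\Big[\Vert\bbu^{n+1}_\bsigma-\bbu^n_\bsigma\Vert+\Vert\bbu^n_\bsigma-\xbar\bbu^n_K\Vert\Big].$$

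\textbf{Step 1 (reduction to squared quantities).} By Cauchy--Schwarz,
$$S_h\le \Big(T\,\vert Q\vert\Big)^{1/2}\Big(\sum_n\Delta t\sum_K\vert K\vert\sum_\bsigma(\cdots)^2\Big)^{1/2},$$
so it suffices to show that the weighted $\ell^2$ sums tend to $0$. On each $K$ the map $\bbu_h\mapsto(\bbu_\bsigma-\xbar\bbu_K)_\bsigma$ is a linear functional on the finite-dimensional space $V(K)$ that vanishes on constants. By a scaling argument to the reference element (this is where regularity of the meshes enters), we get
$$\vert K\vert\sum_\bsigma\Vert\bbu_\bsigma-\xbar\bbu_K\Vert^2\le C\,\Vert \bbu_h-\mathbf{c}\Vert^2_{L^2(K)}\quad\text{for any constant }\mathbf{c}.$$
In the same way, by equivalence of norms, $\vert K\vert\sum_\bsigma\Vert\bbu^{n+1}_\bsigma-\bbu^n_\bsigma\Vert^2\le C\Vert\bbu_h(\cdot,t_{n+1})-\bbu_h(\cdot,t_n)\Vert^2_{L^2(K)}$ when the average is included among the degrees of freedom.

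\textbf{Step 2 (smooth approximation).} Fix $\epsilon>0$ and choose $\bbv_\epsilon\in C^\infty_c$ with $\Vert\bbu-\bbv_\epsilon\Vert_{L^2(Q\times[0,T])}<\epsilon$.

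For the spatial term, take $\mathbf{c}=\bbv_\epsilon(\bbx_K,t)$. Then
$$\Vert\bbu_h-\mathbf{c}\Vert_{L^2(K)}\le\Vert\bbu_h-\bbu\Vert_{L^2(K)}+\Vert\bbu-\bbv_\epsilon\Vert_{L^2(K)}+\Vert\bbv_\epsilon-\mathbf{c}\Vert_{L^2(K)}.$$
The last term is at most $Ch\Vert\nabla\bbv_\epsilon\Vert_\infty\vert K\vert^{1/2}$. Summing over $K$ and $n$ gives a bound $C(\Vert\bbu_h-\bbu\Vert^2+\epsilon^2+h^2C_\epsilon)$, which is below $C\epsilon^2$ for $h$ small.

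For the temporal term, use the same triangle inequality between $\bbu_h(\cdot,t_{n+1})$, $\bbu(\cdot,t+\Delta t)$, $\bbv_\epsilon(\cdot,t+\Delta t)$, $\bbv_\epsilon(\cdot,t)$, $\bbu(\cdot,t)$ and $\bbu_h(\cdot,t_n)$, integrated over $t\in[t_n,t_{n+1}[$, together with continuity of translations in $L^2$. Since $\bbu_h$ is piecewise constant in time, $\bbu_h(\cdot,t_{n+1})=\bbu_h(\cdot,t+\Delta t)$ for $t$ in that interval, so the sum equals $\Vert\bbu_h(\cdot+\Delta t)-\bbu_h\Vert^2_{L^2}$ up to boundary layers in time. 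These layers are controlled by the $L^\infty$ bound.

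\textbf{Main obstacle.} The delicate step is the local inverse/scaling inequality of Step 1, which bounds nodal differences by $L^2$ norms with constants independent of $K$. It needs the regularity of $\{\mathcal{T}_h\}$ and the fact that the $\P^2\oplus b$ degrees of freedom form a unisolvent set. Its $L^2$-on-$K$ version also needs care near $\partial Q$: one should enlarge $Q$ slightly, so that every $K\subset Q$ lies inside the region where convergence holds. Once that inequality is in place, the rest is the standard Kröner-type $\epsilon/3$ argument. The $L^\infty$ bound is then used only to handle the time boundary terms and to guarantee that $\bbu\in L^2_{loc}$ is bounded, which is what allows the squared and unsquared sums to be interchanged.
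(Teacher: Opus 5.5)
Your proof is correct, and its skeleton matches the paper's. Both proofs convert the nodal sums into space--time integrals of $\bbu_h$ through an elementwise equivalence of norms on $V(K)$. Both also work with the $\vert K\vert$-weighted sum: the first display of the paper's proof carries the factor $\vert K\vert h$, so your rescaling of the statement is the one the paper itself uses.

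The difference is in how those integrals are shown to vanish. The paper stays in $L^1$ and bounds the time part by $\int_0^T\int_Q\Vert\bbu_h(\bbx,t+\Delta t)-\bbu_h(\bbx,t)\Vert\,d\bbx\,dt$. It then takes a weak-$\star$ limit $\bbv_1$ of $\bbu_h$ and $\bbv_2$ of its time shift, and identifies $\bbv_1=\bbv_2=\bbv$ through a.e.\ convergence of a subsequence and dominated convergence. The spatial part is left to ``the same arguments''. Identifying two weak limits does not by itself make the $L^1$ norm of the difference tend to zero. That route therefore needs exactly the step you supply: a triangle inequality through $\bbu$ (or a smooth $\bbv_\epsilon$), continuity of translations, and the $L^\infty$ bound on the last time slabs. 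Your comparison with the local constant $\bbv_\epsilon(\bbx_K,t)$ also gives an explicit treatment of the $\Vert\bbu_\bsigma-\xbar{\bbu}_K\Vert$ term, which the paper does not spell out.

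Two minor corrections to your closing remarks. The detour through $L^2$ by Cauchy--Schwarz is only a convenience; the same estimates work in $L^1$, and that step needs no $L^\infty$ bound. No enlargement of $Q$ is needed for the spatial term either, since the sum only runs over $K\subset Q$.

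One point is worth stating explicitly. The map $\bbu_h\mapsto(\bbu_\bsigma-\xbar{\bbu}_K)_\bsigma$ vanishes on constants only if $\xbar{\bbu}_K$ is the cell average, i.e.\ if $\bbu_h$ is expanded in $\{\psi_\bsigma, b\}$ as elsewhere in the paper. The statement literally uses the Lagrange functions $\varphi_\bsigma$. With that basis a constant state has zero coefficient on $b$, and the sum of $\Vert\bbu_\bsigma^n-\xbar{\bbu}_K^n\Vert$ would not tend to zero for a nonzero limit. You are correctly reading the statement with $\psi_\bsigma$, and you should say so.
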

Its proof is postponed in the appendix \ref{sec:appendix}.

\begin{proof}[Proof of lemma \ref{zelem}.]
We have
$$\sum_{n=0}^\infty\sum_{K}\Delta t \vert K\vert \sum_{j=1}^6 \beta_j\big ( \varphi(\bsigma_j, t_n)-\varphi(\bbx_K,t_n)\big )\bomega^n_{K,\bsigma_j}\bigg (\dfrac{\bbu_{\bsigma_j}^{n+1}-\bbu_{\bsigma_j}^n}{\Delta t}+
\bbPhi_{K,\bsigma_j}(\bbu_h^n)\bigg)=A+B$$
with 
$$A=\sum_{n=0}^\infty \yolo{\sum_K}\vert K\vert \sum_{j=1}^6 \beta_j\big ( \varphi(\bsigma_j, t_n)-\varphi(\bbx_K,t_n)\big )\bomega^n_{K,\sigma_j}\big (\bbu_{\bsigma_j}^{n+1}-\bbu_{\bsigma_j}^n\big )$$
and
$$B=\sum_{n=0}^\infty \yolo{\sum_K}\Delta t\vert K\vert \sum_{j=1}^6 \beta_j\big ( \varphi(\sigma_j, t_n)-\varphi(\bbx_K,t_n)\big )\bomega^n_{K,\sigma_j}\bbPhi_{K,\bsigma_j}(\bbu_h^n)$$
We first have
$$\Vert A\Vert \leq C\sum_{n=0}^\infty \yolo{\sum_K}h\vert K\vert \sum_{\bsigma\in K}\Vert\bbu_{\bsigma}^{n+1}-\bbu_\bsigma^n\Vert $$
where $C$ is a bound independent of $\bbu$ and the mesh, but depending on $\Vert \nabla \varphi\Vert$. Using lemma \ref{sec:rd:lemme1}, we get that $A\rightarrow 0$ when $h\rightarrow 0$.  We also have
\begin{equation*}
    \begin{split}
        \Vert B\Vert& \leq C\sum_{n=0}^\infty \yolo{\sum_K}h \Delta t\vert K\vert \frac{\vert \partial K\vert}{\vert K\vert}\bigg (\sum_{\bsigma,\bsigma'\in K}\Vert \bbu_\bsigma^n-\bbu_{\bsigma'}^n\Vert+\sum_{\bsigma\in K}\Vert \bbu_\bsigma^n-\xbar{\bbu}_K^n\Vert \bigg ) \\
        &\leq C\sum_{n=0}^\infty \yolo{\sum_K}h \Delta t \vert \partial K\vert\bigg (\sum_{\bsigma,\bsigma'\in K}\Vert \bbu_\bsigma^n-\bbu_{\bsigma'}^n\Vert +\sum_{\bsigma\in K}\Vert \bbu_\bsigma^n-\xbar{\bbu}_K^n\Vert \bigg )\\
        & \leq C\sum_{n=0}^\infty\Delta t\yolo{\sum_K}\vert K\vert \bigg (\sum_{\bsigma,\bsigma'\in K}\Vert \bbu_\bsigma^n-\bbu_{\bsigma'}^n\Vert +\sum_{\bsigma\in K}\Vert \bbu_\bsigma^n-\xbar{\bbu}_K^n\Vert \bigg )
        \end{split}
        \end{equation*}
        which also converges to $0$ when $h\rightarrow 0$ from Lemma \ref{sec:rd:lemme1}. This ends the proof.
\end{proof}

\begin{remark}
    We see that the consistency requirement \eqref{ConsistantceResAF} on the point values is quite weak. This can be observed in the numerical simulation where with a wrong implementation of the point residuals one can get results that look good \ldots However, we can only get the correct convergence error only with a correct implementation. Hence, this means that the point value update is mostly playing a role for getting accurate results. This fact will be further exploited to design entropy satisfying schemes.
\end{remark}

\section{Extension to entropy}
Let us assume that the semi discrete scheme \eqref{appendix:scheme} also satisfies:
\begin{equation}
\label{appendix:scheme:entropy}
\vert K\vert\dfrac{d\eta(\xbar{\bbu}_K)}{dt}+\sum_{e}\vert e\vert \hbbg(\xbar{\bbu}_K,\xbar{\bbu}_{K_{e,-}}, \bbu_h)\leq 0
\end{equation}
Then we have the following result:
\begin{proposition}
\label{prop:entropy}
Under the condition of proposition \ref{prop:LxW:AF}, the limit solution satisfies the entropy inequality: for any $\varphi\in C^1_0(\R^d\times \R^+)$, $\varphi\geq 0$
$$\int_{\R^d\times \R^+}\bigg (\eta(\bbu)\dpar{\varphi}{t}+\bbg(\bbu)\cdot \nabla\varphi\bigg)\; d\bbx dt+\int_{\R^d}\eta(\bbu_0(\bbx))\varphi(\bbx,0)\; d\bbx\yolo{\geq} 0$$
\end{proposition}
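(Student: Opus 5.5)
We follow the Lax--Wendroff argument of Proposition \ref{prop:LxW:AF}, but only on the average values; the point values enter solely through the numerical entropy flux $\hbbg$. We assume that $\hbbg$ is consistent, i.e. $\hbbg(\bbu,\bbu,\bbu)=\bbg(\bbu)\cdot\bbn_e$, Lipschitz continuous, and conservative: the contributions of $K$ and $K_{e,-}$ through $e$ cancel. This is the natural analogue of assumption 1 and we take it as part of the hypotheses.

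\emph{Step 1: discrete inequality.} From \eqref{appendix:scheme:entropy}, the Euler forward step together with the convexity of $\eta$ gives the fully discrete inequality
$$\vert K\vert\big(\eta(\xbar\bbu_K^{n+1})-\eta(\xbar\bbu_K^n)\big)+\Delta t\sum_e\vert e\vert\hbbg\le 0.$$
This holds under a CFL condition, in the usual Tadmor sense; for SSP Runge--Kutta it follows by convex combination. The paper works in the semi-discrete framework, so this step is essentially assumed.

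\emph{Step 2: test and sum.} Multiply the inequality by $\varphi(\bbx_K,t_n)\ge0$ and sum over $n$ and $K$. Perform a summation by parts in time, exactly as for term $(I)$, to get
$$-\sum_n\Delta t\sum_K\vert K\vert\,\frac{\varphi(\bbx_K,t_{n+1})-\varphi(\bbx_K,t_n)}{\Delta t}\,\eta(\xbar\bbu_K^{n+1})-\sum_K\vert K\vert\varphi(\bbx_K,0)\eta(\xbar\bbu_K^0).$$
Use conservation of $\hbbg$ to rewrite the flux term as
$$\sum_n\Delta t\sum_e\big(\varphi(\bbx_K,t_n)-\varphi(\bbx_{K_{e,-}},t_n)\big)\int_e\hbbg\,d\ell.$$

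\emph{Step 3: pass to the limit.} This is the main obstacle. We must pass to the limit with $\eta(\xbar\bbu_K^n)$ instead of the $L^2$-convergent $\bbu_\Delta$. Lemma \ref{sec:rd:lemme1} gives $\sum_n\Delta t\sum_K\vert K\vert\sum_\bsigma\Vert\bbu^n_\bsigma-\xbar\bbu^n_K\Vert\to0$. Hence the piecewise constant function equal to $\xbar\bbu_K^n$ on $K\times[t_n,t_{n+1}[$ has the same $L^1_{loc}$ limit $\bbv$ as $\bbu_\Delta$. Since $\eta$ is locally Lipschitz on the bounded range (assumption 4), $\eta(\xbar\bbu_K^n)\to\eta(\bbv)$ in $L^1_{loc}$.

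For the flux term, write $\hbbg(\xbar\bbu_K,\xbar\bbu_{K_{e,-}},\bbu_h)=\bbg(\xbar\bbu_K)\cdot\bbn_e+R_e$. By the Lipschitz property, $\Vert R_e\Vert$ is bounded by jumps $\Vert\xbar\bbu_K-\xbar\bbu_{K_{e,-}}\Vert$ and $\Vert\bbu_\bsigma-\xbar\bbu_K\Vert$. Each jump between neighbouring averages can be bounded via a shared point value $\bbu_\bsigma$ on $e$ by $\Vert\bbu_\bsigma-\xbar\bbu_K\Vert+\Vert\bbu_\bsigma-\xbar\bbu_{K_{e,-}}\Vert$, so continuity of $\bbu_h$ across edges is exactly what allows Lemma \ref{sec:rd:lemme1} to be used here. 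The weight satisfies $\vert\varphi(\bbx_K)-\varphi(\bbx_{K_{e,-}})\vert\,\vert e\vert=O(h^2)=O(\vert K\vert)$, so the remainder contribution tends to $0$ exactly as in the bound for $B$ in Lemma \ref{zelem}.

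The main part is then $\sum_K\bbg(\xbar\bbu_K)\cdot\sum_e\vert e\vert\,(\varphi(\bbx_K)-\varphi(\bbx_{K_{e,-}}))$. Up to $O(h)\vert K\vert$ errors this equals $-\int_K\bbg\cdot\nabla\varphi$, using $\sum_e\vert e\vert\bbn_e=0$ and Taylor expansion as in \cite{raviart}, and it converges to $-\int\bbg(\bbv)\cdot\nabla\varphi$.

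\emph{Step 4: conclude.} Finally, the initial term converges to $\int\eta(\bbu_0)\varphi(\cdot,0)$ by consistency of the initial averages. Collecting the limits, with the sign reversed because the discrete sum is $\le0$, gives the stated entropy inequality $\ge0$.
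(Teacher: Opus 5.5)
Your overall strategy is the core of the paper's argument: test the average entropy inequality with $\varphi(\bbx_K)\geq 0$ and pass to the limit. The paper also adds the point-value entropy identities weighted by $\vert K\vert\beta_j(\varphi(\bsigma_j)-\varphi(\bbx_K))$. The bracket multiplying these weights vanishes identically by the point-value scheme, so dropping them loses nothing, and you never need \eqref{ConsistantceResAF}. Your use of shared edge point values to control $\Vert\xbar{\bbu}_K-\xbar{\bbu}_{K_{e,-}}\Vert$, and of Lemma \ref{sec:rd:lemme1} to identify the limit of the averages, is more explicit than the paper.

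The genuine gap is Step 1, which is the only place the hypothesis enters. Convexity gives $\eta(\xbar{\bbu}_K^{n+1})\geq\eta(\xbar{\bbu}_K^{n})+\nabla_\bbu\eta(\xbar{\bbu}_K^n)^T(\xbar{\bbu}_K^{n+1}-\xbar{\bbu}_K^n)$, which is the wrong direction. A forward Euler step adds the nonnegative term $\tfrac12\vert K\vert\,\delta^T\nabla_\bbu^2\eta(\xi)\,\delta$, with $\delta=\xbar{\bbu}_K^{n+1}-\xbar{\bbu}_K^n$, to the right-hand side. Hence \eqref{appendix:scheme:entropy} does not imply the fully discrete inequality. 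If \eqref{appendix:scheme:entropy} holds with equality, forward Euler strictly produces entropy for every $\Delta t>0$, so no CFL condition helps, and the SSP convex-combination argument inherits the same defect. Convexity only helps for backward Euler. The fully discrete inequality of Lemma \ref{lemma:ave} relies on the convex-combination structure of the Rusanov scheme, which the blended flux lacks.

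The fix is to do what the paper does and stay in continuous time. Multiply \eqref{appendix:scheme:entropy} by $\varphi(\bbx_K,t)\geq 0$ and integrate, using the exact identity $\int_0^{\infty}\varphi(\bbx_K,t)\frac{d}{dt}\eta(\xbar{\bbu}_K)\,dt=-\varphi(\bbx_K,0)\eta(\xbar{\bbu}_K(0))-\int_0^{\infty}\partial_t\varphi(\bbx_K,t)\,\eta(\xbar{\bbu}_K)\,dt$. Steps 2--4 then go through with $\sum_n\Delta t$ replaced by $\int dt$. If you want the fully discrete setting of Proposition \ref{prop:LxW:AF}, you must add the fully discrete entropy inequality as a hypothesis; it is not a consequence.

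There is a second, repairable inaccuracy in Step 3. On an unstructured mesh, $\sum_{e\subset\partial K}\vert e\vert\big(\varphi(\bbx_K)-\varphi(\bbx_{K_{e,-}})\big)\bbn_e$ is not $-\vert K\vert\nabla\varphi+O(h)\vert K\vert$. It depends on the neighbouring centroids, and even when every neighbour is the point reflection of $K$ through the edge midpoint it equals $-2\vert K\vert\nabla\varphi(\bbx_K)+O(h)\vert K\vert$. Your conclusion survives, but not by an elementwise Taylor argument. Replace $\varphi(\bbx_{K_{e,-}})$ by the edge mean $\varphi_e=\frac{1}{\vert e\vert}\int_e\varphi\,d\ell$. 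By conservation, $\sum_K\varphi(\bbx_K)\sum_{e}\vert e\vert\hbbg=\sum_K\sum_e\vert e\vert\big(\varphi(\bbx_K)-\varphi_e\big)\hbbg$. The main part is then exactly $-\sum_K\bbg(\xbar{\bbu}_K)\cdot\int_K\nabla\varphi\,d\bbx$. The remainder is bounded by exactly your jump estimate, since $\vert e\vert\,\vert\varphi(\bbx_K)-\varphi_e\vert\leq C\vert K\vert$ on a regular mesh.
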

As we can see, there is \emph{no} condition on the point values.
\begin{proof}
From \eqref{appendix:scheme:3}, we get:
$$
\dfrac{ d\eta(\bbu_\bsigma)}{ dt}+ \sum_{K,\bsigma\in K}\bbv(\bbu_\bsigma)^T\bomega_{K,\sigma} \bbPhi_{K,\bsigma}(\bbu_{ h})=0,\qquad \bbv(\bbu_\bsigma)=\nabla_\bbu{\eta}(\bbu_\bsigma)
$$
together with \eqref{appendix:scheme:entropy}. Let $\varphi\in C^1_0(\R^2\times \R^+)$, $\varphi\geq 0$.
We define
\begin{equation*}
\begin{split}\mathcal{I}&=\underbrace{\sum_K\vert K\vert\bigg (
\varphi(\bbx_K) \dfrac{ d\eta(\xbar\bbu_K)}{ dt}+ \sum_{j=1}^6\beta_j\big ( \varphi(\bsigma_j)-\varphi(\bbx_K)\big )\dfrac{ d\eta(\bbu_{ h})}{ dt}(\bsigma_j)\bigg)}_{\mathcal{I}_t}\\&\qquad +
 \underbrace{\yolo{\sum_K\bigg(}\varphi(\bbx_K) \oint_{\partial K}\hbbg_\bbn\;  d\ell+
\vert K\vert \sum_{j=1}^6\beta_j\big ( \varphi(\bsigma_j)-\varphi(\bbx_K)\big )\bigg (\sum_{K', \bsigma_j\in K'}\bbv(\bbu_{\bsigma_j})^T\bomega_{K',\bsigma_j}\bbPhi_{K',\bsigma_j}(\bbu_{ h})\bigg )\bigg )}_{\mathcal{I}_x}
\end{split}
\end{equation*}
We know that
\begin{equation*}
\begin{split}
\int_0^{+\infty}\mathcal{I}_t\; dt
&=\yolo{
- \sum_{K}\vert K\vert\bigg ( \varphi(\bbx_K,0)\eta(\xbar{\bbu}_K^0)+\sum_{j=1}^6\beta_j\big(\varphi(\bsigma_j,0)-\varphi(\bbx_K,0)\big)\eta(\bbu_0(\bsigma_j))\bigg)}\\
&\yolo{-
\int_0^{+\infty}\sum_{K}\vert K\vert \bigg (\partial_t\varphi(\bbx_K)\eta(\xbar{\bbu}_K)+\sum_{j=1}^6\beta_j\big(\partial_t\varphi(\bsigma_j)-\partial_t\varphi(\bbx_K)\big)\eta(\bbu_{\bsigma_j})\bigg )\; dt.}
\end{split}
\end{equation*}
This will converge to
\begin{equation*}
    \yolo{-\int_0^{+\infty}}\int_{\R^d}\yolo{\partial_t\varphi}\; \eta(\bbu)\; d\bbx dt-\int_{\R^d}\varphi(\bbx,0)\yolo{\eta(\bbu_0(\bbx))}\; d\bbx.
\end{equation*}
We get, using the same arguments as for the Lax-Wendroff proof,
\begin{equation*}
\begin{split}
\int_0^{+\infty }\mathcal{I}_x\; dt&= \int_0^{+\infty }\yolo{\sum_K}\varphi(\bbx_K) \oint_{\partial K}\hbbg_\bbn\;  d\ell\;dt\\
&\yolo{+\int_0^{+\infty} \vert K\vert \sum_{j=1}^6\beta_j\big ( \varphi(\bsigma_j)-\varphi(\bbx_K)\big )\bigg (\sum_{K', \bsigma_j\in K'}\bbv(\bbu_{\bsigma_j})^T\bomega_{K',\bsigma_j}\bbPhi_{K',\bsigma_j}(\bbu_{ h})\bigg )dt}
\end{split}
\end{equation*}
%because
%$$\dfrac{d\eta(\bbu_\bsigma)}{dt}=\bbv_\bsigma^T \dfrac{d\bbu_\bsigma}{dt}=\sum_{K, \bsigma\in K} \bbv_\bsigma^T \bomega_{K,\bsigma}\dfrac{d\bbu_\bsigma}{dt}$$
will converge to
$$\yolo{-}\int_0^{+\infty}\int_{\R^d}\nabla \varphi\cdot \bbg(\bbu)\; d\bbx dt.$$
In the end, we get the requested entropy inequality.
\end{proof}
%%%%%%%%%%%%%%%%%%%%%%%%%%%%%

\subsection{Bound preserving and entropy  properties of the first order schemes}
%%%%%%%%%%%%%%%%%%%%%%%%%%%%%

Concerning the bound preserving properties of these schemes, we first have the following lemma:
\begin{lemma}\label{lemma:pt}
We have:
\begin{enumerate}
\item $$\bbu_\bsigma^{n+1}=\sum\limits_{K, \bsigma\in K}\sum_{T_i^K, \bsigma\in T_i^K}\frac{\vert K\vert}{6\vert C_{\bsigma}\vert}\widetilde{\bbu}_\bsigma^{n+1}$$ with 
$$\widetilde{\bbu}_{\bsigma}^{n+1}=\bbu_\bsigma^n-\frac{6\Delta t}{\vert K\vert}\widehat{\bbPhi}_{\bsigma, T_i^K}^{LO}.$$ $\widehat{\bbPhi}_{\bsigma, T_i^K}^{LO}$ is defined by \eqref{Theta}
and
$$\vert C_{\bsigma}\vert\yolo{=\sum_{K,\bsigma\in K}\sum_{T_i^K,\bsigma\in T_i^K}\frac{\vert K\vert}{6}}=\sum_{K, \bsigma\in K}\frac{\vert K\vert }{3},$$
\item If the values of $\bbu$ attached to the vertices of $T_i^K$ (i.e. $2$ out of $\{\bbu_{\bsigma_1}, \ldots, \bbu_{\bsigma_6}\}$ and $\xbar{\bbu}_K$) are in $\DD$ at $t_n$, then $\widetilde{\bbu}_{\bsigma}^{n+1}\in \DD$ if $$\frac{\Delta t}{\vert K\vert}\leq \yolo{\frac{1}{6}}.$$
\item We have, under the same CFL condition, the entropy inequality:
$$\eta(\bbu_\bsigma^{n+1})\leq \eta(\bbu_\bsigma^n)-\Delta t \sum_{K, \bsigma\in K}\Psi_{\bsigma}^K$$ with
$$\Psi_\bsigma^K=\frac{1}{\vert C_\bsigma\vert}\sum_{T_i^K, \bsigma\in T_i^K} \widehat{\Psi}_{\bsigma, T_i^K}^{LO}$$
\begin{equation}
\begin{split}\widehat{\Psi}_{\bsigma, T_i^K}^{LO}&=\frac{1}{3}\sum_{\bsigma_l}\yolo{\frac{1}{2}}\bbg(\bbu_{\bsigma_l})\cdot \bbn_{\bsigma_l}^{T_j^K}+ \alpha_{T^K_i}\big (\eta(\bbu_\sigma)-\overline{\eta(\bbu)}_{T^K_i}\big )\\
  \overline{\eta(\bbu)}_{T^K_i}&=\frac{1}{3}\sum_{\bsigma_j\in T^K_i}\eta(\bbu_{\bsigma_l}).
\end{split}
\end{equation}
\end{enumerate}
\end{lemma}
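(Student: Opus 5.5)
The plan is to treat the low order point update as a convex combination of local Rusanov-type (local Lax--Friedrichs) updates on the sub-triangles $T_i^K$. Each local update is then handled with the classical argument for Rusanov schemes: write it as a convex combination of states of the form $\bbu - \lambda(\bbf(\bbv)-\bbf(\bbu))\cdot\bbn$, which lie in $\DD$ and obey the entropy inequality.

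\textbf{Item 1.} This is pure algebra. With $\ell_\bsigma=0$, the forward Euler step is $\bbu_\bsigma^{n+1}=\bbu_\bsigma^n-\Delta t\sum_{K}\frac{1}{\vert C_\bsigma\vert}\sum_{T_i^K\ni\bsigma}\widehat{\bbPhi}^{LO}_{\bsigma,T_i^K}$. Two facts give the claimed form:
\begin{itemize}
\item $\vert T_i^K\vert=\vert K\vert/6$, so the weights $\frac{\vert K\vert}{6\vert C_\bsigma\vert}$ sum to one over all pairs $(K,T_i^K)$ containing $\bsigma$;
\item $\frac{\vert K\vert}{6\vert C_\bsigma\vert}\cdot\frac{6\Delta t}{\vert K\vert}=\frac{\Delta t}{\vert C_\bsigma\vert}$.
\end{itemize}
Hence $\bbu_\bsigma^n$ can be distributed with these weights, which proves the formula. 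Since each boundary degree of freedom lies in exactly two sub-triangles of $K$, we get $\vert C_\bsigma\vert=\sum_K 2\vert K\vert/6=\sum_K\vert K\vert/3$.

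\textbf{Item 2.} Start from \eqref{Theta}. Sum only over the two vertices $\bsigma_l\neq\bsigma$ of $T_i^K$, since the term $l=\bsigma$ vanishes. Then
$$\widetilde{\bbu}_\bsigma^{n+1}=\Big(1-\tfrac{2\Delta t\,\alpha_{T_i^K}}{\vert K\vert}\cdot 2\Big)\bbu_\bsigma+\sum_{\bsigma_l\neq\bsigma}\tfrac{4\Delta t\,\alpha}{\vert K\vert}\Big(\bbu_{\bsigma_l}-\tfrac{1}{2\alpha}(\bbf(\bbu_{\bsigma_l})-\bbf(\bbu_\bsigma))\cdot\bbn_{\bsigma_l}^{T_i^K}\Big).$$
The exact constants are to be tracked from $\frac{6\Delta t}{\vert K\vert}\cdot\frac13$. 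Each bracket is, up to normalizing $\bbn$, the intermediate state of a one-dimensional Rusanov/HLL average. By the standard result (Guermond--Popov), this state is the average of the Riemann fan between $\bbu_\bsigma$ and $\bbu_{\bsigma_l}$ in direction $\bbn_{\bsigma_l}/\Vert\bbn_{\bsigma_l}\Vert$, provided $\alpha_{T_i^K}\ge$ the maximal speed times $\Vert\bbn_{\bsigma_l}^{T_i^K}\Vert$, i.e. $\alpha$ includes the scaled normal. Since $\DD$ is convex and invariant for the Riemann problem, the bracket lies in $\DD$.

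The coefficient on $\bbu_\bsigma$ is nonnegative under the stated CFL bound $\Delta t/\vert K\vert\le 1/6$, with $\alpha$ normalized as in the paper. So $\widetilde{\bbu}^{n+1}_\bsigma$ is a convex combination of elements of $\DD$. The main obstacle is exactly this bookkeeping: matching the scaling of $\alpha_{T_i^K}$ (which contains $\Vert\bbn\Vert$) to the sub-triangle geometry so that the bound $1/6$ comes out. If the constants do not close, I would absorb the factor into the definition of $\alpha_{T_i^K}$.

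\textbf{Item 3.} Use the same convex decomposition. The Riemann-average states satisfy the entropy version of the averaging argument: Jensen's inequality plus the entropy inequality of the exact Riemann solution give $\eta(\text{bracket})\le\eta(\bbu_{\bsigma_l})-\frac{1}{2\alpha}(\bbg(\bbu_{\bsigma_l})-\bbg(\bbu_\bsigma))\cdot\bbn$.

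Apply convexity of $\eta$ to the convex combination and reassemble. This gives $\eta(\widetilde{\bbu}^{n+1}_\bsigma)\le\eta(\bbu_\bsigma^n)-\frac{6\Delta t}{\vert K\vert}\widehat{\Psi}^{LO}_{\bsigma,T_i^K}$. Here the same rewriting as \eqref{Theta} is used in reverse, with $\sum_l\bbn_{\bsigma_l}^{T_i^K}=0$ converting the difference form into the stated one.

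Finally, apply Jensen once more to the convex combination of item 1, with weights $\frac{\vert K\vert}{6\vert C_\bsigma\vert}$. This yields $\eta(\bbu_\bsigma^{n+1})\le\eta(\bbu_\bsigma^n)-\Delta t\sum_K\Psi_\bsigma^K$.
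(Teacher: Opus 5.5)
Item 1 is correct, and your plan for items 2--3 is the paper's plan: write the update as a convex combination of Riemann averages, apply Jensen, then apply Jensen again over the item-1 weights. The gap is the state you put inside the brackets.

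The quantity $\bbu_{\bsigma_l}-\frac{1}{2\alpha}\big(\bbf(\bbu_{\bsigma_l})-\bbf(\bbu_\bsigma)\big)\cdot\bbn_{\bsigma_l}^{T_i^K}$ is \emph{not} the Rusanov/Lax--Friedrichs intermediate state. That state is $\frac{\bbu_\bsigma+\bbu_{\bsigma_l}}{2}-\ldots$, the mean of the Riemann fan over a symmetric interval. Your bracket is an extrapolation. For scalar $\bbf(u)=\mathbf{a}u$ it equals $(1-\theta)u_{\bsigma_l}+\theta u_\bsigma$ with $\theta=\mathbf{a}\cdot\bbn_{\bsigma_l}^{T_i^K}/(2\alpha)$. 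This leaves the segment between $u_\bsigma$ and $u_{\bsigma_l}$ whenever $\mathbf{a}\cdot\bbn_{\bsigma_l}^{T_i^K}<0$, for every $\alpha>0$. So it need not lie in $\DD$, and no rescaling of $\alpha$ rescues it. The entropy bound you claim for it also fails: with $\eta=u^2$, $\bbg=\mathbf{a}u^2$, the right-hand side minus the left-hand side of your claimed inequality equals $\theta(1-\theta)(u_{\bsigma_l}-u_\bsigma)^2$, which is negative for $\theta<0$. Separately, your weights add up to $1+4\Delta t\alpha/\vert K\vert$. The correct coefficient in front of your bracket would be $2\Delta t\alpha/\vert K\vert$, but the bracket would still be the wrong state.

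The fix, which is the paper's decomposition, is to keep half of $\bbu_\bsigma$ inside each bracket and put a remainder on $\bbu_{\bsigma_l}$. Set $c=\Delta t\,\alpha_{T_i^K}/\vert K\vert$ and sum over the two vertices $\bsigma_l\neq\bsigma$ of $T_i^K$:
\[
\widetilde{\bbu}_\bsigma^{n+1}=(1-6c)\,\bbu_\bsigma^n+2c\sum_{\bsigma_l\neq\bsigma}\Big(\frac{\bbu_\bsigma^n+\bbu_{\bsigma_l}^n}{2}-\big(\bbf(\bbu_{\bsigma_l}^n)-\bbf(\bbu_\bsigma^n)\big)\cdot\frac{\bbn_{\bsigma_l}^{T_i^K}}{2\alpha}\Big)+c\sum_{\bsigma_l\neq\bsigma}\bbu_{\bsigma_l}^n .
\]
\begin{itemize}
\item The weights sum to one and are nonnegative exactly when $6\Delta t\,\alpha/\vert K\vert\le 1$; this is where the $1/6$ comes from.
\item Once $\alpha\ge\lambda_{\max}\Vert\bbn_{\bsigma_l}^{T_i^K}\Vert$, the middle states are genuine averages of the exact Riemann solution between $\bbu_\bsigma$ and $\bbu_{\bsigma_l}$. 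Hence they lie in $\DD$ and satisfy
\[
\eta(\cdot)\le\frac{\eta(\bbu_\bsigma)+\eta(\bbu_{\bsigma_l})}{2}-\frac{1}{2\alpha}\big(\bbg(\bbu_{\bsigma_l})-\bbg(\bbu_\bsigma)\big)\cdot\bbn_{\bsigma_l}^{T_i^K}.
\]
\item Jensen on this combination gives
\[
\eta(\widetilde{\bbu}_\bsigma^{n+1})\le\eta(\bbu_\bsigma^n)-\frac{\Delta t}{\vert K\vert}\sum_{\bsigma_l}\Big[\big(\bbg(\bbu_{\bsigma_l})-\bbg(\bbu_\bsigma)\big)\cdot\bbn_{\bsigma_l}^{T_i^K}+2\alpha\big(\eta(\bbu_\bsigma)-\eta(\bbu_{\bsigma_l})\big)\Big].
\]
\item Since $\sum_{l}\bbn_{\bsigma_l}^{T_i^K}=0$, this equals $\eta(\bbu_\bsigma^n)-\frac{6\Delta t}{\vert K\vert}\widehat{\Psi}^{LO}_{\bsigma,T_i^K}$.
\end{itemize}
Your final Jensen step over the item-1 weights then completes item 3 as you describe.
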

\begin{proof}
We first have
\begin{equation*}
\begin{split}
\bbu_\bsigma^{n+1}&=\bbu_\bsigma^n-\Delta t \sum_{K, \bsigma\in K}\frac{1}{\vert C_\bsigma\vert}\bigg (\sum_{T_i^K, \bsigma\in T_i^K} \widehat{\bbPhi}_{\bsigma, \yolo{T_i^K}}^{LO}\bigg )\\
&=\sum_{K, \bsigma\in K} \frac{\vert K\vert}{3\vert C_{\bsigma}\vert}\bigg ( \bbu_\bsigma^n-\frac{3\Delta t}{\vert K\vert}\sum_{T_i^K, \bsigma\in T_i^K} \widehat{\bbPhi}_{\bsigma, \yolo{T_i^K}}^{LO}\bigg )\\
&=\sum_{K, \bsigma\in K}\yolo{\sum_{T_i^K, \bsigma\in T_i^K}} \frac{\vert K\vert}{6\vert C_{\bsigma}\vert}\bigg (
\bbu_\bsigma^n-\frac{6\Delta t}{\vert K\vert}\widehat{\bbPhi}_{\bsigma, \yolo{T_i^K}}^{LO}\bigg )
\end{split}
\end{equation*}
because $\vert T_i^K\vert = \vert K\vert/6$. This show the first item.

Then as in \cite{GuermondPopov,GuermondNazarovPopov} (see also \cite{KuzminMollerShadidShaskhov2010,LohmannKuzminShadidMabuza2017,CotterKuzmin2016} for earlier and very similar contributions),
\yolo{
\begin{equation*}
\begin{split}
&\bbu_\bsigma^n-\frac{6\Delta t}{\vert K\vert}\widehat{\bbPhi}_{\bsigma, \yolo{T_i^K}}^{LO}=
\bbu_\bsigma^n-\frac{\Delta t}{\vert K\vert} \sum_{\bsigma_l\in T_i^K} \bigg( \big (\bbf(\bbu_{\bsigma_l}^n)-\bbf(\bbu_{\bsigma}^n)\big )\cdot\bbn_{\bsigma_l}^{T_{i,K}}+2\alpha(\bbu_\bsigma^n-\bbu_{\bsigma_l}^n)\bigg )\\
&=\big ( 1-\frac{6\Delta t}{\vert K\vert}\alpha)\bbu_\bsigma^n+\frac{2\Delta t\alpha}{\vert K\vert}\sum_{\bsigma_l\in T_i^K}\bigg ( 
\frac{\bbu_\bsigma^n+\bbu_{\bsigma_l}^n}{2}- \big ( \bbf(\bbu_{\bsigma_l}^n)-\bbf(\bbu_\bsigma^n)\big )\cdot\frac{\bbn_{\bsigma_l}^{T_{i,K}}}{2\alpha}\bigg )+\frac{\Delta t\alpha}{\vert K\vert}\sum_{\bsigma_l\in T_i^K,\bsigma_l\neq\bsigma}\bbu_{\bsigma_l}^n
\end{split}
\end{equation*}}
This shows that under that if $\alpha$ is larger than  $\alpha(\bbu_\bsigma, \bbu_{\bsigma'}, \frac{\bbn_{\bsigma'^{T_{i,K}}}}{\Vert\bbn_{\bsigma'^{T_{i,K}}}\Vert})\Vert\bbn_{\bsigma'^{T_{i,K}}}\Vert $ and if $\Delta t$ satisfies
$$\frac{\yolo{6}\Delta t}{\vert K\vert}\alpha\leq 1$$ 
then under our assumptions on $\bbu$, 
$$\frac{\bbu_\bsigma+\bbu_{\bsigma_l}}{2}- \big ( \bbf(\bbu_{\bsigma_l})-\bbf(\bbu_\bsigma)\big )\cdot\frac{\bbn_{\bsigma_l}^{T_{i,K}}}{2\alpha}\in \DD.$$

Following again \cite{GuermondPopov,GuermondNazarovPopov} and under the same CFL condition, we see that
$$\eta(\widetilde{\bbu}_\bsigma^{n+1})\leq \eta({\bbu}_\bsigma^{n})-6\frac{\Delta t}{\vert K\vert}
\bigg (\frac{1}{3}\sum_{\bsigma_l\in T_i^K} \frac{1}{2}\bbg(\bbu_{\bsigma_l})\cdot \bbn_{\bsigma_l}^{T_{i,K}}+\alpha \big (\eta(\bbu_\bsigma)-\overline{\eta(\bbu)}_{T_i^K}\big )$$
and the rest follows.
\end{proof}

We have a similar lemma for the average which proof is similar and very classical:
\begin{lemma}\label{lemma:ave}
The finite volume scheme:
$$\bbu_K^{n+1}=\bbu_K^{n+1}-\frac{\Delta t}{\vert K\vert}\sum_e \vert e\vert \hbbf_{\bbn_e}(\bbu_K,\bbu_{K'})$$ where $\hbbf_{\bbn_e}$ is the Rusanov flux is invariant domain preserving and  entropy satisfying with
$$\eta(\bbu_K^{n+1})\leq \eta(\bbu_K^{n})-\frac{\Delta t}{\vert K\vert}\sum_e\vert e\vert \hbbg_{\bbn_e}(\bbu_K^{n},\bbu_{K'}^{n})$$
where
$$\hbbg_{\bbn_e}(\bbu_K,\bbu_{K'})=\frac{1}{2}\bigg ( \bbg(\bbu_K)+\bbg(\bbu_{K'})\bigg )\cdot \bbn_e+\alpha_e\big ( \eta(u_K^n)-\xbar{\eta(u)})$$
and 
$$\xbar{\eta(u)}=\frac{1}{2}\bigg ( \eta(\bbu_K)+\eta(\bbu_{K'})\bigg )$$
under the CFL condition 
$$\Delta \frac{\max_e\vert e\vert \alpha_e}{\vert K\vert}\leq \frac{1}{2}.$$
\end{lemma}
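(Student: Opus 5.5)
The plan is to write the forward Euler update as a convex combination of one-dimensional Rusanov states and use convexity of $\DD$ and of $\eta$. The first step is the closure identity $\sum_{e\in\FF_K}\vert e\vert\bbn_e=0$. It lets us subtract $\bbf(\bbu_K^n)\cdot\bbn_e$ in each flux without changing the scheme, so that
\begin{equation*}
\bbu_K^{n+1}=\bbu_K^n-\frac{\Delta t}{\vert K\vert}\sum_e\vert e\vert\Big(\tfrac12\big(\bbf(\bbu_{K'}^n)-\bbf(\bbu_K^n)\big)\cdot\bbn_e-\tfrac{\alpha_e}{2}\big(\bbu_{K'}^n-\bbu_K^n\big)\Big).
\end{equation*}
Here we read the displayed update with $\bbu_K^n$ on the right, which is clearly what is meant.

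The second step introduces the bar states
\begin{equation*}
\bar\bbu_e=\frac{\bbu_K^n+\bbu_{K'}^n}{2}-\frac{\big(\bbf(\bbu_{K'}^n)-\bbf(\bbu_K^n)\big)\cdot\bbn_e}{2\alpha_e}.
\end{equation*}
With them the update reads
\begin{equation*}
\bbu_K^{n+1}=\Big(1-\sum_e\frac{\Delta t\,\vert e\vert\alpha_e}{\vert K\vert}\Big)\bbu_K^n+\sum_e\frac{\Delta t\,\vert e\vert\alpha_e}{\vert K\vert}\,\bar\bbu_e .
\end{equation*}
The weights are nonnegative as soon as $\Delta t\sum_e\vert e\vert\alpha_e/\vert K\vert\le1$. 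This follows from the stated CFL condition when the number of edges is accounted for in the constant (for triangles one may need $\tfrac13$ instead of $\tfrac12$). I will simply state the condition in the form $\Delta t\sum_e\vert e\vert\alpha_e\le\vert K\vert$.

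Invariant domain preservation then reduces to showing $\bar\bbu_e\in\DD$. This is the classical Guermond--Popov fact already used in the proof of Lemma \ref{lemma:pt}. Since $\alpha_e$ bounds the maximal wave speed of the Riemann problem in direction $\bbn_e$, the state $\bar\bbu_e$ equals the average over $[-\tfrac12,\tfrac12]$ at time $1/(2\alpha_e)$ of the exact Riemann solution. That solution takes values in the convex set $\DD$, hence so does its average.

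For the entropy, first apply Jensen's inequality to the convex combination, which gives $\eta(\bbu_K^{n+1})\le(1-\sum\lambda_e)\eta(\bbu_K^n)+\sum\lambda_e\eta(\bar\bbu_e)$. Next use the entropy inequality of the Riemann solution integrated over the same space--time box, as in Guermond--Popov:
\begin{equation*}
\eta(\bar\bbu_e)\le\frac{\eta(\bbu_K^n)+\eta(\bbu_{K'}^n)}{2}-\frac{\big(\bbg(\bbu_{K'}^n)-\bbg(\bbu_K^n)\big)\cdot\bbn_e}{2\alpha_e}.
\end{equation*}
Substituting and reinserting $\sum_e\vert e\vert\bbg(\bbu_K^n)\cdot\bbn_e=0$ (closure again) recovers exactly the entropy flux $\hbbg_{\bbn_e}$ with the $\alpha_e(\eta(\bbu_K)-\xbar{\eta(u)})$ term.

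The main obstacle is justifying the Riemann-average representation of $\bar\bbu_e$ and its entropy inequality for a general system with a single entropy. It relies on existence of an entropy solution of the Riemann problem with finite speed bounded by $\alpha_e$. I would cite \cite{GuermondPopov} for this rather than reprove it. The remaining algebra is routine.
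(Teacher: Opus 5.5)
Your proposal is correct and follows essentially the paper's route: the paper's proof just defers to \cite{GuermondNazarovPopov}, whose argument is the same bar-state convex decomposition followed by Jensen's inequality and the integrated entropy inequality of the Riemann solution, as the paper also does for the point values in Lemma \ref{lemma:pt}. Your reading of the typo in the update is right, and so is your CFL remark: the $\tfrac12$ in the statement is the one-dimensional (two-interface) constant, whereas on a triangle the bar-state argument needs $\Delta t\sum_e\vert e\vert\alpha_e\le\vert K\vert$ (for instance $\Delta t\max_e\vert e\vert\alpha_e/\vert K\vert\le\tfrac13$).
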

\begin{proof}See \cite{GuermondNazarovPopov}.
\end{proof}
%%%%%%%%%%%%%%%%%%%%%%%%%%%%%%%%%%%%%%%%%%
%\input{entropy_relations}

\subsection{A semi-discrete entropy satisfying scheme}
Here we look for a scheme that will satisfy a semi-discrete entropy inequality. From the truncation error and the formal convergence analysis, we see that it is enough to have an entropy inequality for the average only and we will write:
\begin{equation}
    \label{ent:eq:1}\vert K\vert \dfrac{d\xbar{\bbu}_K}{dt}+\sum_{e\subset\text{edge of }K} \vert e\vert \hbbf_{\bbn_e}(\xbar\bbu_{K}, \xbar\bbu_{K_{e,-}}, \bbu_{|e})=0\end{equation}
with, on the edge $e$ of $K$, we assume
$$\vert e\vert\hbbf_{\bbn_e}(\xbar{\bbu}_{K}, \xbar{\bbu}_{K_{e,-}}, \xbar\bbu_{\vert e})=(1-\ell_e) \int_e\bbf(\bbu)\cdot\bbn_e\;d\gamma+\ell_e\vert e\vert\hbbf_\bbn(\xbar\bbu_{K}, \xbar\bbu_{K_{e,-}}):=\vert e\vert \big ((1-\ell_e)\hbbf^{HO}_{\bbn_e}+\ell_e\hbbf^{LO}_{\bbn_e}\big )$$ with obvious notations.
As usual, $K_{e,-}$ is the element on the other side of $e$. The parameter $\ell_e$ will be chosen to satisfy a semi-discrete entropy inequality. Since there is no ambiguity, we drop the subscript "$e$".

The guideline is Tadmor's work. Setting $\bbv=\nabla_\bbu\eta$ the entropy variable, we write
$$g=\bbv^T\bbf-\psi$$ where $\psi$ is the potential. 
From \eqref{ent:eq:1}, we get
$$\vert K\vert \dfrac{d\eta(\xbar{\bbu}_K)}{dt}+\sum_{e}\vert e\vert \langle \bbv_K, \hbbf_{\bbn_e}\rangle=0.$$
Then we write
\begin{equation*}
    \begin{split}
\sum_{e}\vert e\vert \langle \bbv_K, \hbbf_{\bbn_e}\rangle&=
\sum_e \vert e\vert \bigg ( \big \langle\frac{\bbv_K+\bbv_{K_{e,-}}}{2},\hbbf_{\bbn_e}\rangle +\big \langle \frac{\bbv_K-\bbv_{K_{e,-}}}{2},\hbbf_{\bbn_e}\rangle\bigg )\\
&=\sum_e \vert e\vert \bigg ( \big \langle\frac{\bbv_K+\bbv_{K_{e,-}}}{2},\hbbf_{\bbn_e}\rangle -\frac{\psi(\xbar{\bbu}_K)-\psi(\xbar{\bbu}_{K_{e,-}})}{2}\bbn_e+
\frac{\psi(\xbar{\bbu}_K)+\psi(\xbar{\bbu}_{K_{e,-}})}{2}\bbn_e
+\big \langle \frac{\bbv_K\yolo{-}\bbv_{K_{e,-}}}{2},\hbbf_{\bbn_e}\rangle\bigg )\\
&=\sum_e \vert e\vert \bigg ( \big \langle\frac{\bbv_K+\bbv_{K_{e,-}}}{2},\hbbf_{\bbn_e}\rangle -\frac{\psi(\xbar{\bbu}_K)+\psi(\xbar{\bbu}_{K_{e,-}})}{2}\bbn_e\bigg )\\
&\qquad+ \sum_e \vert e\vert \bigg (
\big \langle \frac{\bbv_K-\bbv_{K_{e,-}}}{2},\hbbf_{\bbn_e}\rangle
-\frac{\psi(\xbar{\bbu}_K)-\psi(\xbar{\bbu}_{K_{e,-}})}{2}\bbn_e\bigg )\\
&=\sum_e \vert e\vert \hbbg_{\bbn_e}+\sum_e \vert e\vert \bigg (
\big \langle \frac{\bbv_K-\bbv_{K_{e,-}}}{2},\hbbf_{\bbn_e}\rangle
-\frac{\psi(\xbar{\bbu}_K)-\psi(\xbar{\bbu}_{K_{e,-}})}{2}\bbn_e\bigg )
\end{split}
\end{equation*}
where the entropy flux is
$$\hbbg_{\bbn_e}=\big \langle\frac{\bbv_K+\bbv_{K_{e,-}}}{2},\hbbf_{\bbn_e}\rangle -\frac{\psi(\xbar{\bbu}_K)+\psi(\xbar{\bbu}_{K_{e,-}})}{2}\bbn_e.$$
Hence a enough condition is
\begin{equation}
    \label{entropy:cond}
    \big \langle \frac{\bbv_K-\bbv_{K_{e,-}}}{2},\hbbf_{\bbn_e}\rangle
-\frac{\psi(\xbar{\bbu}_K)-\psi(\xbar{\bbu}_{K_{e,-}})}{2}\bbn_e\geq 0
\end{equation}
on each edge $e$.
This condition writes
\begin{equation}
    \label{entropy:condition2}
   \Theta(\ell_e):= \big \langle \bbv_K-\bbv_{K_{e,-}}, \hbbf^{LO}_{\bbn_e}\big \rangle-\big ( \psi(\xbar{\bbu}_K)-\psi(\xbar{\bbu}_{K_{e,-}})\big )\bbn_e+\ell_e\big \langle \bbv_K-\bbv_{K_{e,-}}, \hbbf^{HO}_{\bbn_e}-\hbbf^{LO}_{\bbn_e}\big \rangle\geq 0
\end{equation}
By construction, the Rusanov flux
$$\hbbf_{\bbn_e}^{LO}=\frac{1}{2}\big (\bbf(\xbar{\bbu}_K)\bbn_e+
\bbf(\xbar{\bbu}_{K_{e,-}})\bbn_e+\alpha_e (\xbar{\bbu}_K-\xbar{\bbu}_{K_{e,-}})\big ),$$
is known that \yolo{entropy stable} if \yolo{$\alpha_e$ is the largest speed estimated using the states $\xbar\bbu_K$ and $\xbar{\bbu}_{K,e}$ and thus $\Theta(0)\geq 0$. Since $\Theta$ depends affinely on $\ell_e$,} there exists a largest $\ell^\star$ such that for $\ell_e\in [0,\ell^\star]$, $\Theta(\ell_e)\geq 0$. 

To simplify, we set
$$A=\big \langle \bbv_K-\bbv_{K_{e,-}}, \hbbf^{LO}_{\bbn_e}\rangle-\big ( \psi(\xbar{\bbu}_K)-\psi(\xbar{\bbu}_{K_{e,-}}\big )\bbn_e, \quad B=-\big \langle \bbv_K-\bbv_{K_{e,-}}, \hbbf^{HO}_{\bbn_e}-\hbbf^{LO}_{\bbn_e}\big \rangle$$
with $A\geq 0$. The problem is to study $A-\ell B\geq 0$.
We have $A\geq \ell B$ and want $\ell\in [0,1]$. 
\begin{figure}[htbp]
    \centering
    \begin{tikzpicture}
    \begin{axis}[
        % --- AJUSTEMENT DE LA TAILLE ICI ---
        width=0.49\textwidth,       % Ajustez la largeur (ex: 8cm, 0.5\textwidth)
 %       height=6.5cm,    % Ajustez la hauteur proportionnellement
        % ------------------------------------
        axis lines = middle,
        xlabel = {$x$},
        ylabel = {$y$},
        xmin = -2, xmax = 4.5,
        ymin = -0.5, ymax = 2.5,
        grid = major,
        restrict y to domain=0:5,
        legend pos = north east,
        yticklabel style={yshift=0.45cm, anchor=north east},
        xticklabel style={xshift=-0.1cm, anchor=north east},
        legend image code/.code={
            \ifnum\plotnum=0
                \draw[color=black, thick] (0cm,0cm) -- (0.6cm,0cm);
 %           \fi
 %           \ifnum\plotnum=1
  %              \draw[color=blue, dashed, ultra thick] %(0cm,0cm) -- (0.6cm,0cm);
            \fi
        }
    ]
        % --- Tracé de g(x) (Noir plein) ---
        \addplot [domain=-2:0, samples=2, color=black, thick] {1};
        \addplot [domain=0:4, samples=2, color=black, thick, forget plot] {1 - x/4};
        \addplot [domain=4:4.5, samples=2, color=black, thick, forget plot] {0};
        
        % --- Tracé de f(x) (Pointillés bleus) ---
        \addplot [domain=-4:0, samples=2, color=blue, dashed, ultra thick] {1};
        \addplot [domain=0.22:4.5, samples=100, color=blue, dashed, ultra thick, forget plot] {1/x};
        
        \legend{$\varphi_{1/4}$}
   %     \draw[blue, fill=blue] (axis cs:0,1) circle (2.5pt);
    \end{axis}
    \end{tikzpicture}
    %figure 2
        \begin{tikzpicture}
    \begin{axis}[
        % --- AJUSTEMENT DE LA TAILLE ICI ---
        width=0.49\textwidth,       % Ajustez la largeur (ex: 8cm, 0.5\textwidth)
 %       height=6.5cm,    % Ajustez la hauteur proportionnellement
        % ------------------------------------
        axis lines = middle,
        xlabel = {$x$},
        ylabel = {$y$},
        xmin = -2, xmax = 2.5,
        ymin = -0.5, ymax = 2.5,
        grid = major,
        restrict y to domain=0:5,
        legend pos = north east,
        yticklabel style={yshift=0.25cm, anchor=north east},
        xticklabel style={xshift=-0.15cm, anchor=north east},
        legend image code/.code={
            \ifnum\plotnum=0
                \draw[color=black, thick] (0cm,0cm) -- (0.6cm,0cm);
            \fi
%            \ifnum\plotnum=1
%                \draw[color=blue, dashed, ultra thick] (0cm,0cm) -- (0.6cm,0cm);
 %           \fi
        }
    ]
        % --- Tracé de g(x) (Noir plein) ---
        \addplot [domain=-2:0, samples=2, color=black, thick] {1};
        \addplot [domain=0:2, samples=2, color=black, thick, forget plot] {1 - x/2};
        \addplot [domain=2:2.5, samples=2, color=black, thick, forget plot] {0};
        
        % --- Tracé de f(x) (Pointillés bleus) ---
        \addplot [domain=-4:0, samples=2, color=blue, dashed, ultra thick] {1};
        \addplot [domain=0.22:4.5, samples=100, color=blue, dashed, ultra thick, forget plot] {1/x};
        
        \legend{$\varphi_{1/2}$}
%        \draw[blue, fill=blue] (axis cs:0,1) circle (2.5pt);
    \end{axis}
    \end{tikzpicture}
    \caption{\label{ell}Graphical representation of the condition $A-\ell B\geq 0$ and the functionals used in this paper to estimate $\ell_e$.}
    \label{fig:mes_fonctions}
\end{figure}
This condition express that $(B,\ell)$ should be below the dotted curves on Figure \ref{ell}. A first solution could be 
$$\ell=\left\{ \begin{array}{ll}
1 & \text{ if } B\leq A\\
\frac{A}{B} & \text{ else.}
\end{array}
\right .$$
In practice we have found it is better to stay strictly below the dotted hyperbola, and use 
\begin{equation}\label{entropy:min}
    \varphi_\alpha=\left \{ \begin{array}{cc}
    1 & B\leq 0 \\
    1-\alpha B & \text{else,}
\end{array}\right .
\end{equation}
with $\alpha\geq \frac{1}{4}$. In practice we have taken $\alpha=\frac{1}{2}$, see Figure \ref{ell}.

\section{Numerical examples}
In order to evaluate this entropy blending parameter, we have chosen the KPP test case \cite{KPP} because it is extremely sensitive on how well the entropy condition is satisfied.
The PDE is
\begin{equation}
    \dpar{u}{t}+\text{ div }\bbf(u)=0
    \end{equation} with
    $$\bbf(u)=\big ( \cos u, \sin u\big ).$$
    The initial condition is 
\begin{equation*}
    u(\bbx,0)=\left\{
    \begin{aligned}
    &\frac{7}{2}\pi \approx 10.995&&\mbox{if}~\Vert\bbx-(0,0.5)\Vert\leq 1,\\
    &\frac{\pi}{4}\approx 0.785 &&\mbox{else}.
    \end{aligned}\right.
\end{equation*} The problem is set in $[-2,2]^2$.
The unique entropy solution at the final time $t=1$ exhibits a two-dimensional rotational wave structure.  A key difficulty of this test is ensuring that the numerical scheme introduces sufficient dissipation so that the approximations converge to the correct entropy solution, and is not too dissipative so that one gets a sharp representation of the discontinuities.

In the numerical simulations, we proceed as follows for the average values.
Four different methods have been tested, they differ by how $\ell$ is evaluated:
\begin{itemize}
\item If the BP condition is only applied as in \cite{BP_Pampa_VEM},
\item if the BP condition of \cite{BP_Pampa_VEM} and the OE condition of \cite{OEPampa} are used,
\item if the BP and OE conditions are use in addition to the semi-discrete entropy condition with $\varphi_{1/2}$,
\item if only the semi-discrete condition is used.
\end{itemize}
 For the point values, except for the first order results, we keep the blending parameter of \cite{BP_Pampa_VEM} or \cite{OEPampa}.

\begin{figure}[htbp]
\begin{center}
\subfigure[point value]{\includegraphics[width=0.49\textwidth,clip=]{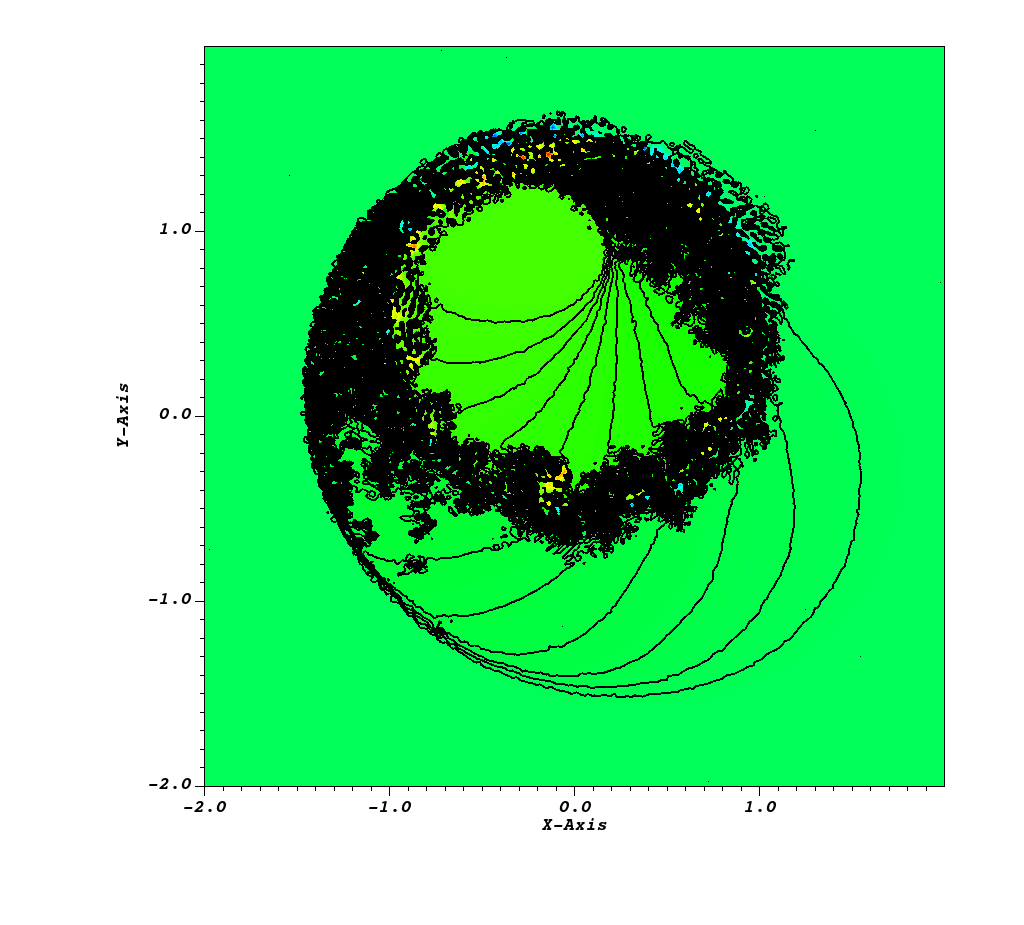}}
\subfigure[average value]{\includegraphics[width=0.49\textwidth,clip=]{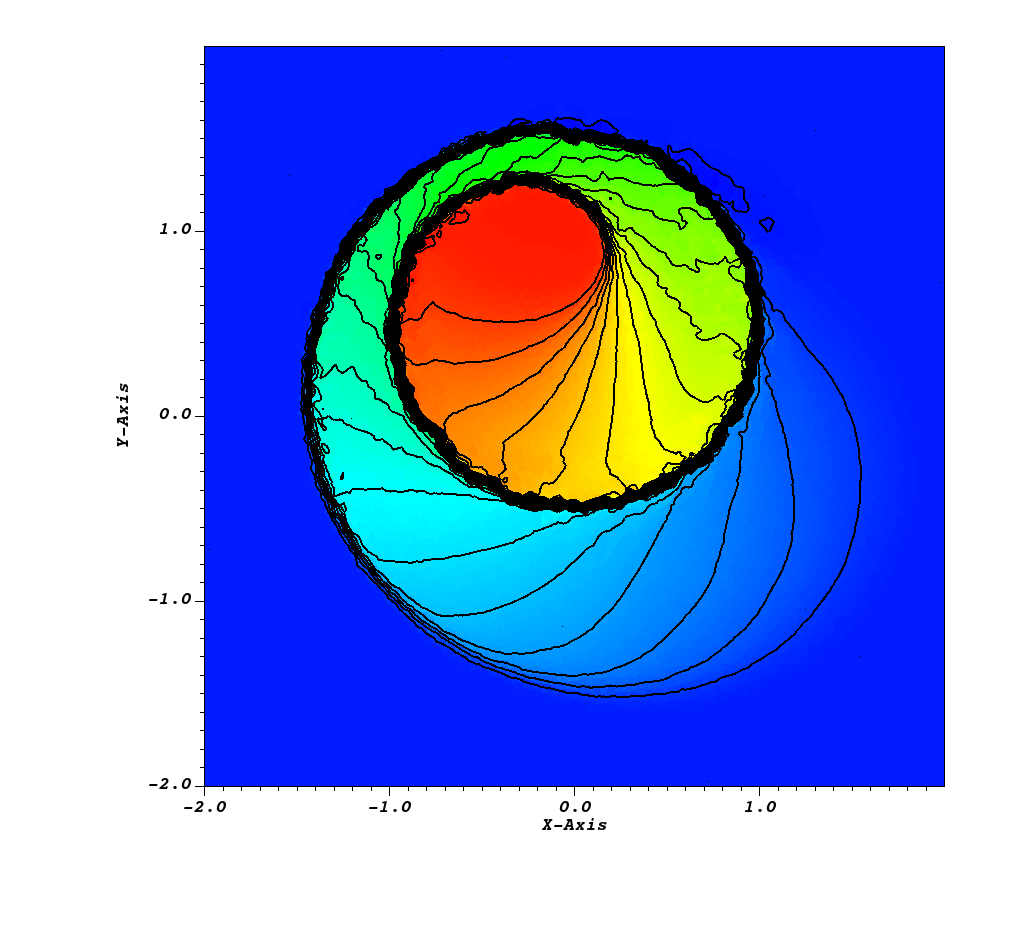}}
\caption{\label{fig:KPP:tadmor} KPP solution when only the 'entropy' limiter is used on the average, and the fully high order scheme is used for the point value update. We take 30 isolines in both cases, $\xbar{\bbu}\in [0.477, 11.338]$ and $\bbu_\bsigma\in [-25.562,      39.288]$.}
\end{center}
\end{figure}
 The first results plotted in Figure \ref{fig:KPP:tadmor} shows the solution when the entropic blending is applied on the average update only. The full high order scheme is used for the point update. Of course the solution is very wiggly, especially the point value one, but clearly the entropy solution is captured. This result show that the entropy blending described in this paper is effective: one only need to control the average. One can also compare with Figure \ref{fig:KPP:pt}-(a) and \ref{fig:KPP:ave}-(a) when only the BP blending of \cite{BP_Pampa_VEM} is used\footnote{This seems to be in contradiction with a statement of \cite{BP_Pampa_VEM}, this is not. We have realized after the publication of this paper that we had chosen the wrong global extrema. We had chosen a value of the maximum strictly less that the true value, hence the scheme was first order initially on the discontinuity. This was enough to get the correct solution.}.

In what follow, unless explicitly stated, the BP blending of \cite{BP_Pampa_VEM} or \cite{OEPampa} are now used for the point values update in order to better control the point values extrema. 
The results are displayed on Figures \ref{fig:KPP:pt}, \ref{fig:KPP:ave} and \ref{fig:KPP:zoom}.
We first notice that applying the BP limiter only will not lead to the correct entropy condition. If we only use the entropy limiter (subfigure d), we may get under- or overshoot, but we get the entropy solution. This shows that the method is effective. \yoloo{Compared with the results in Figure \ref{fig:KPP:tadmor}, where the fully high order scheme is used for the point value update. Here, the point value update is additionally blended with the entropy limiter.} \yoloo{Therefore, as expected, the solution of the point values is much cleaner.}
\begin{figure}[htbp]
\begin{center}
\subfigure[BP]{\includegraphics[width=0.49\textwidth,clip=]{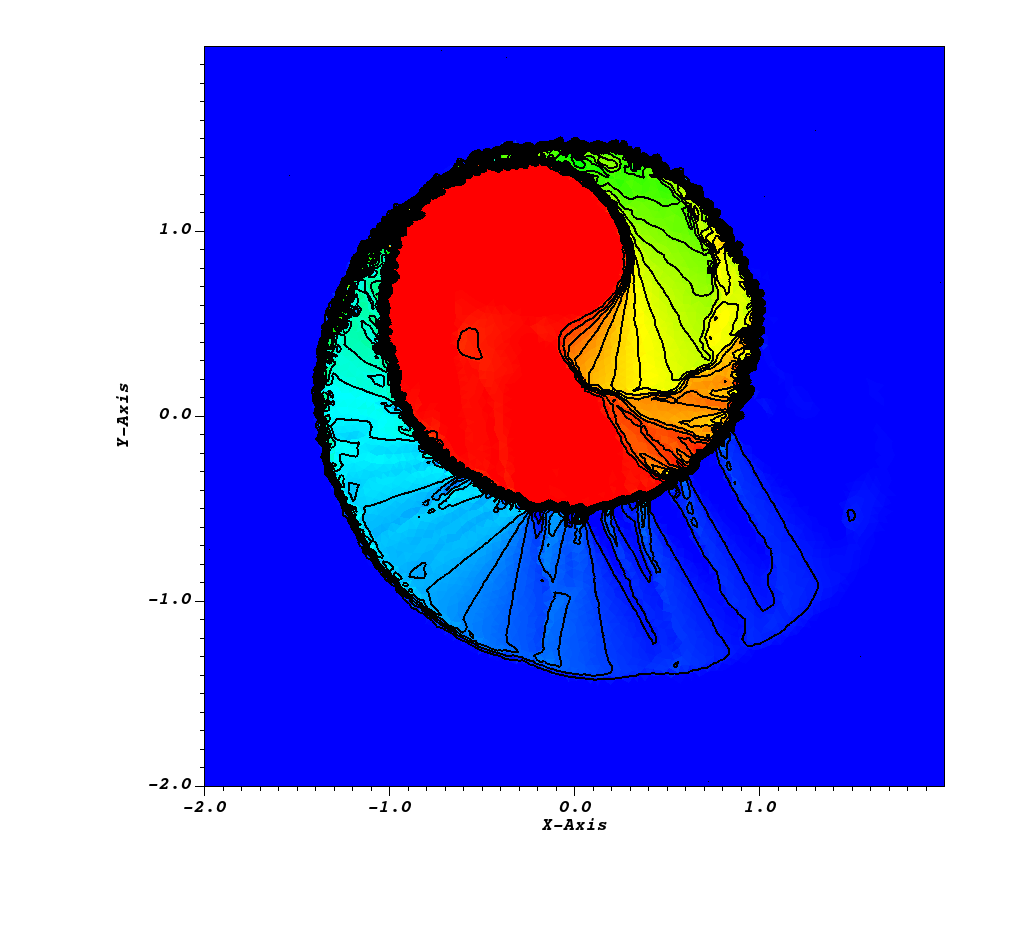}}
\subfigure[BP OE]{\includegraphics[width=0.49\textwidth,clip=]{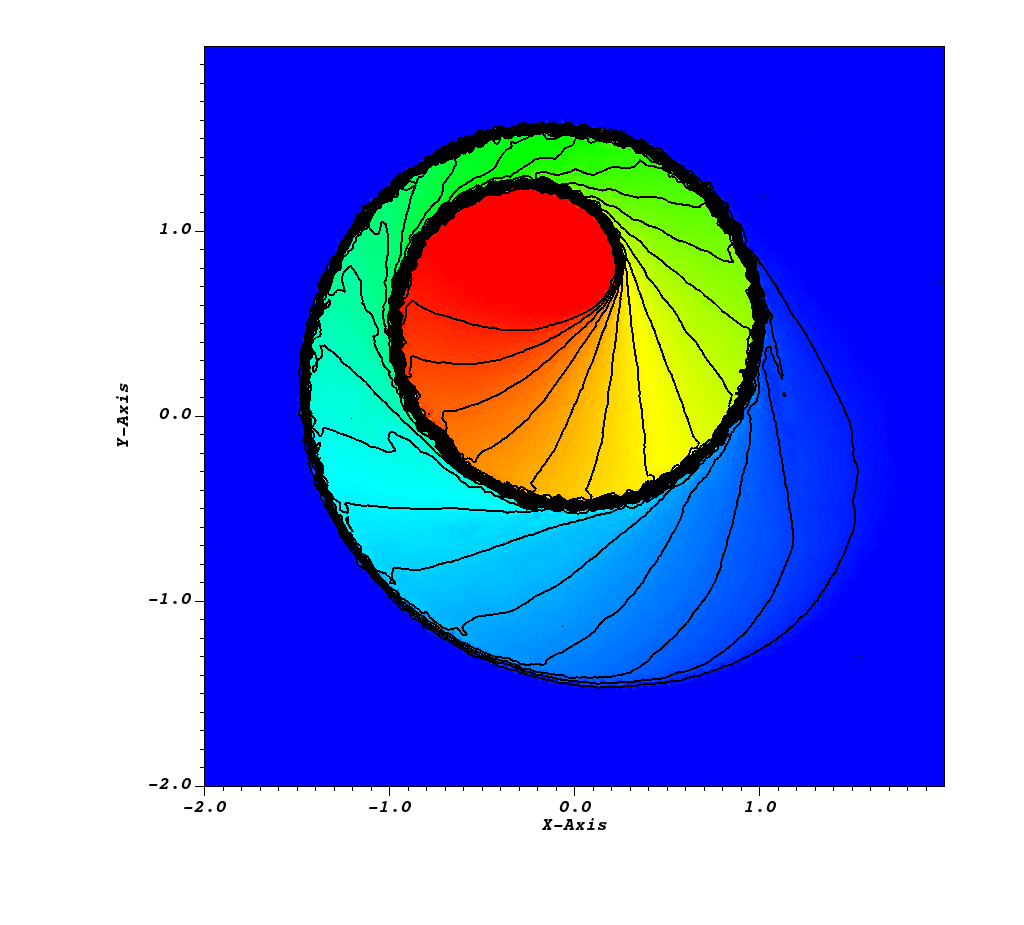}}
\subfigure[BP OE Tadmor]{\includegraphics[width=0.49\textwidth,clip=]{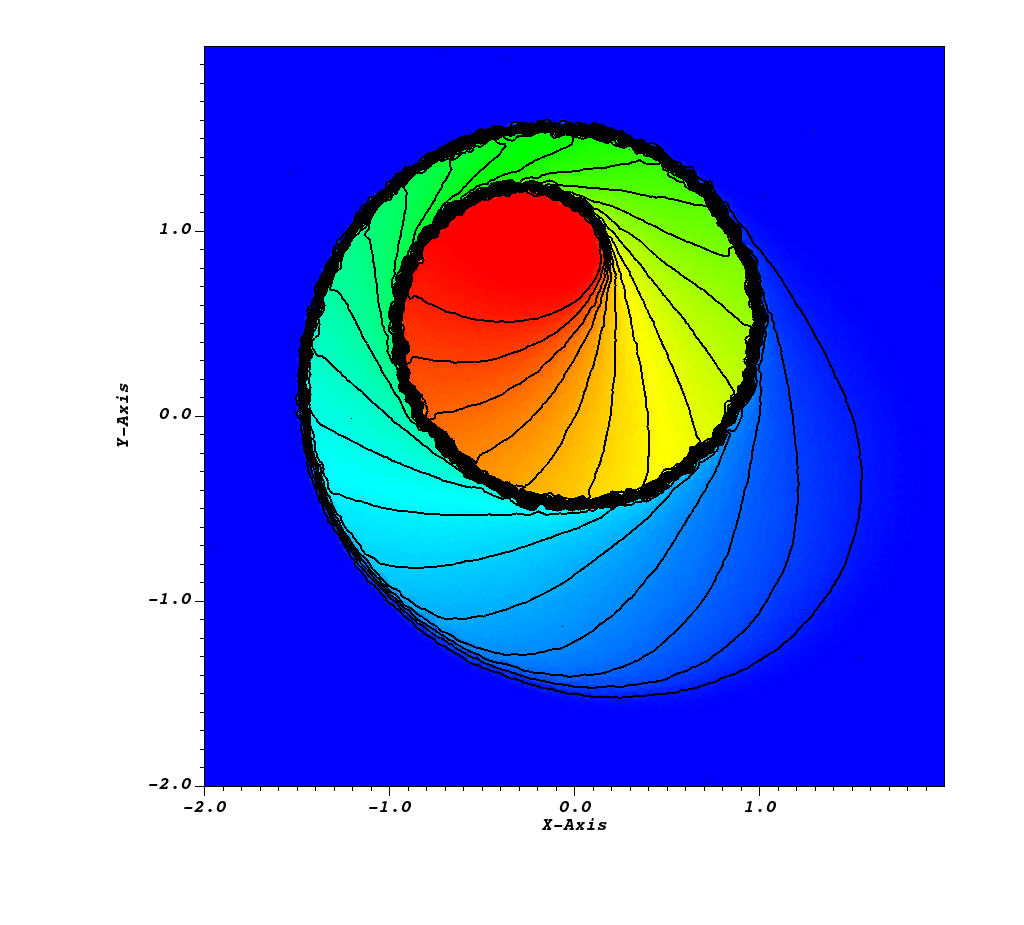}}
\subfigure[Tadmor]{\includegraphics[width=0.49\textwidth,clip=]{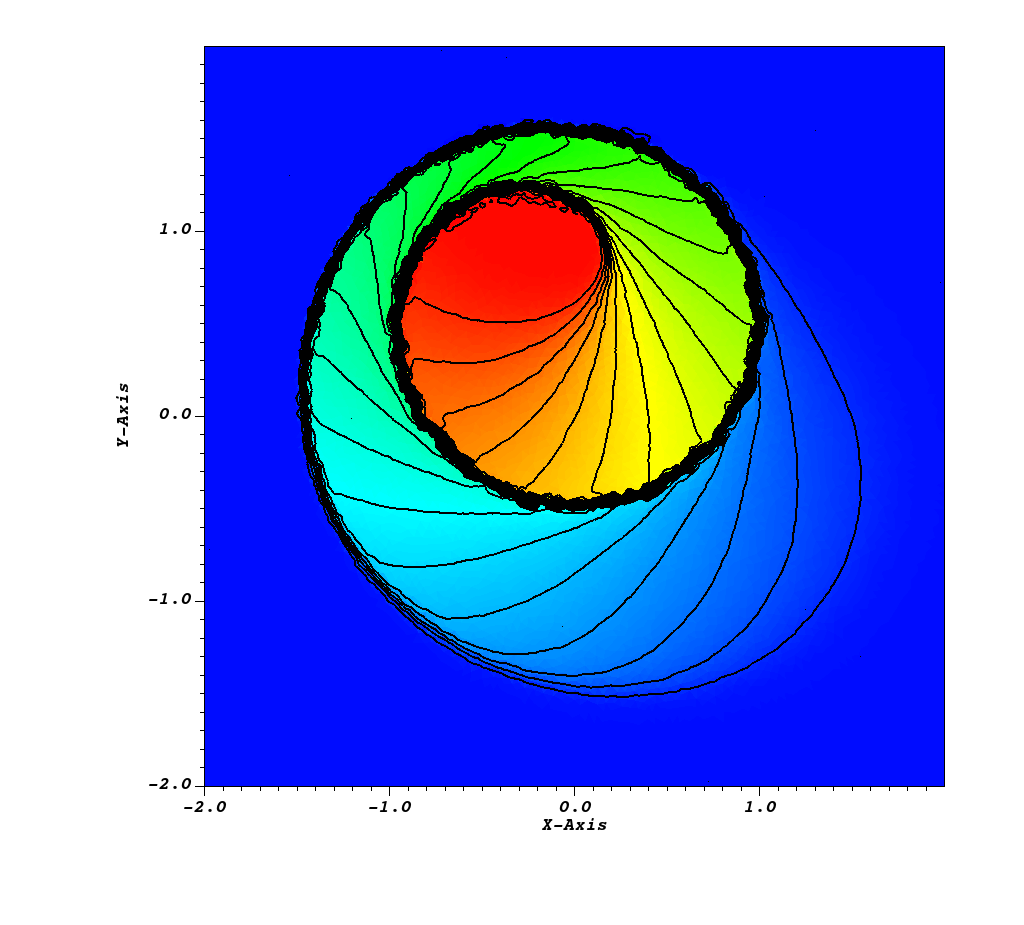}}
\end{center}
\caption{\label{fig:KPP:pt}KPP problem, average values, (a): BP solution, (b) BP with OE, (c): BP with OE and entropy limiter \eqref{entropy:min}. 30 iso-lines between $0.7$ and $11$.}
\end{figure}
The other variant are almost undistinguishable. From Figure \ref{fig:KPP:ave}, it seems however that the method that combines all the criteria provides the smoothest results, but BP+OE is already excellent.
\begin{figure}[H]
\begin{center}
\subfigure[BP]{\includegraphics[width=0.49\textwidth]{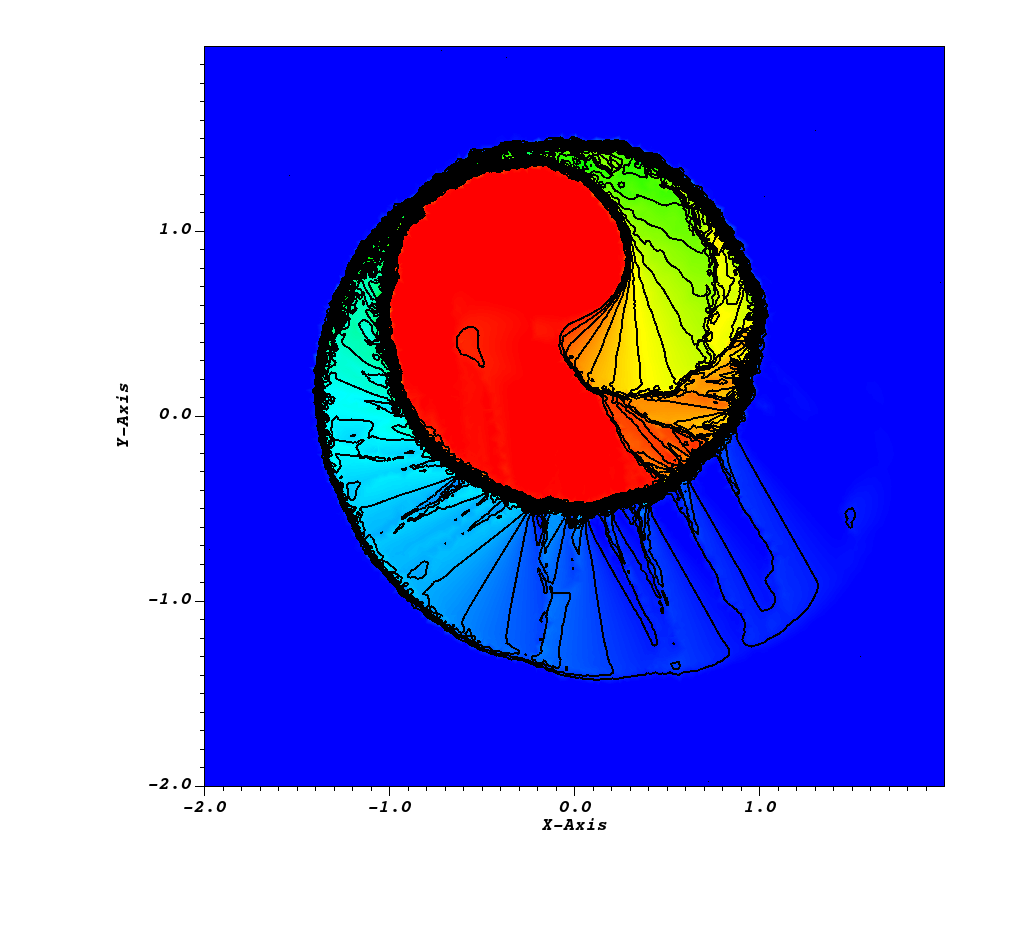}}
\subfigure[BP OE]{\includegraphics[width=0.49\textwidth]{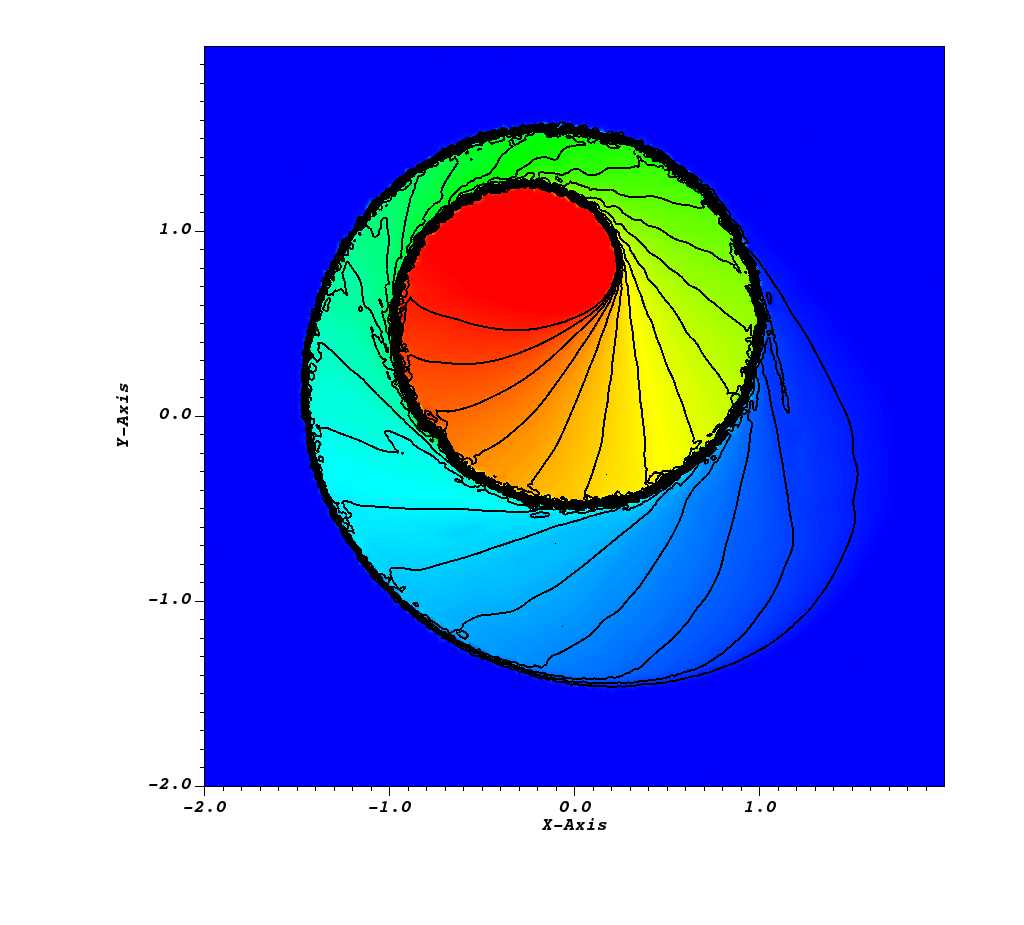}}
\subfigure[BP OE Tadmor]{\includegraphics[width=0.49\textwidth]{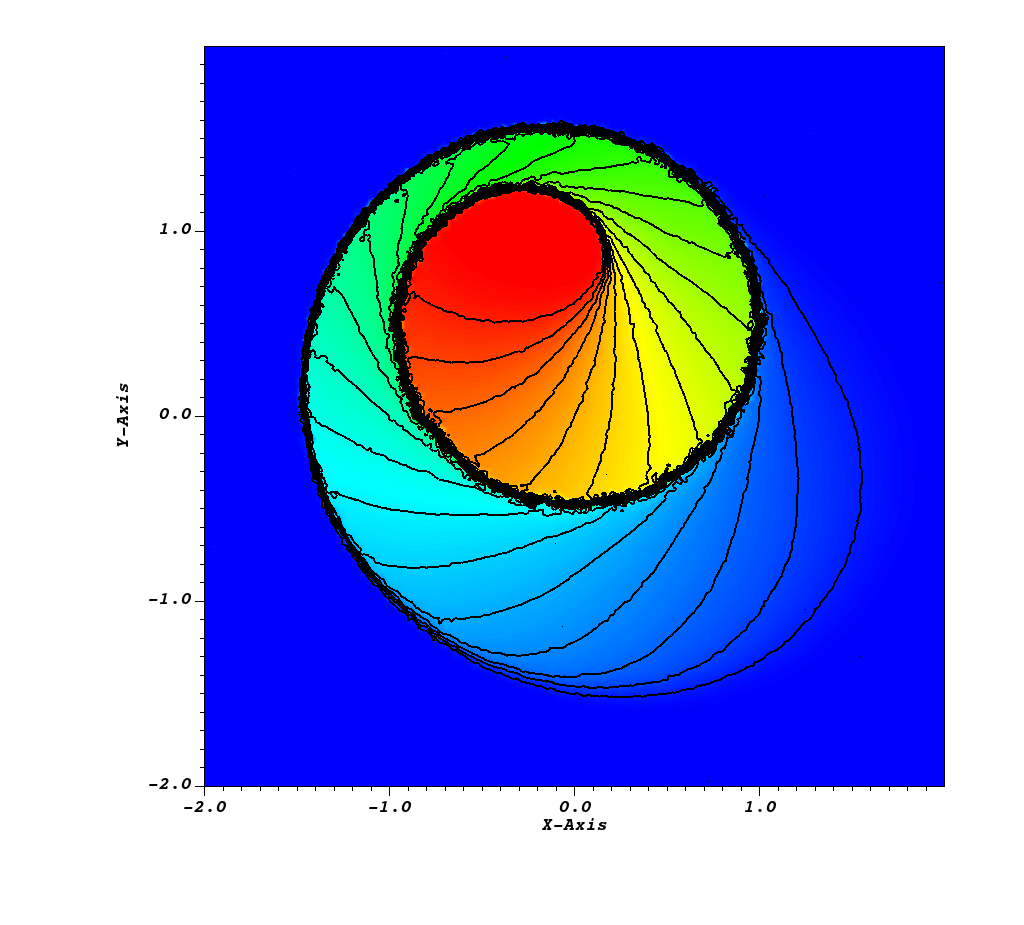}}
\subfigure[Tadmor]{\includegraphics[width=0.49\textwidth]{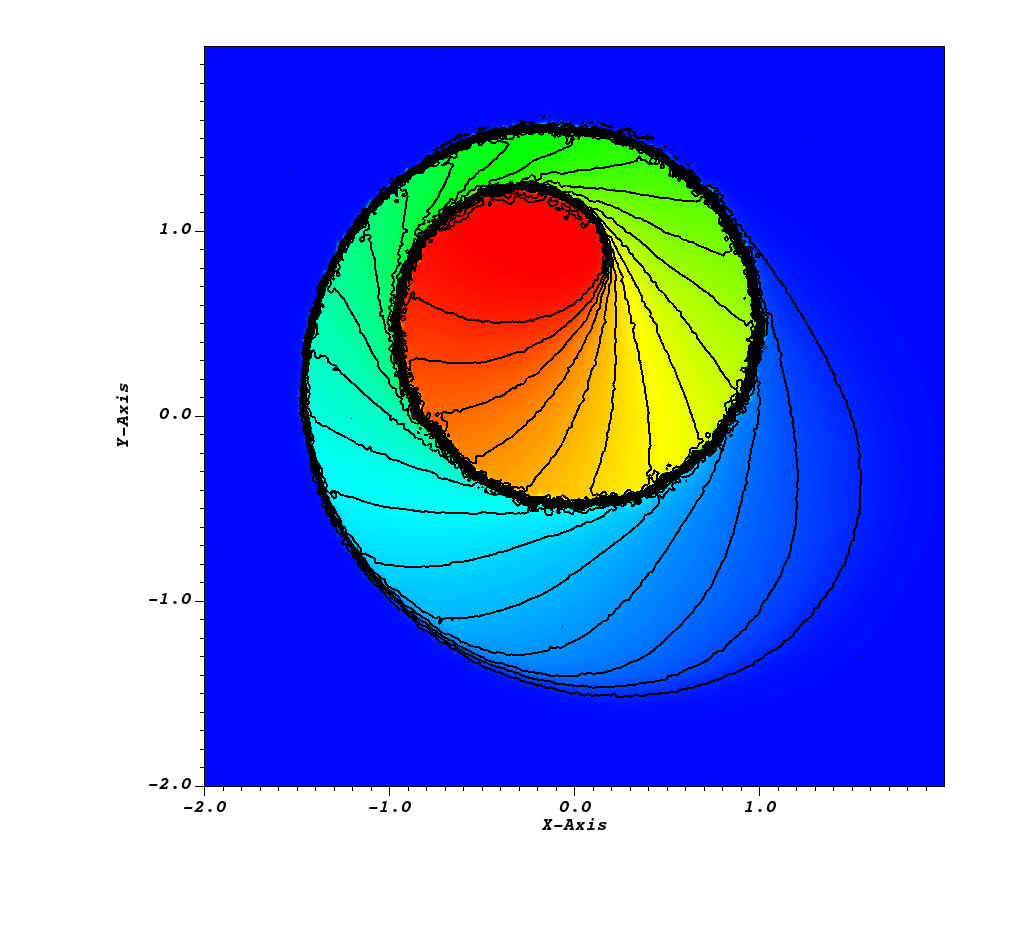}}
\end{center}
\caption{\label{fig:KPP:ave}KPP problem, point values, (a): BP solution, (b) BP with OE, (c): BP with OE and entropy limiter \eqref{entropy:min}. 30 isolines between $0.7$ and $11$.}
\end{figure}
The mesh used for solving this KPP problem is displayed in Figure \ref{fig:KPP:zoom} together with the solution with the entropy correction only, the BP-OE-Tadmor parameter and the first order solution. This also show the gain in accuracy across the discontinuities.
\begin{figure}[H]
\begin{center}
   \subfigure[point,Tadmor]{\includegraphics[width=0.45\textwidth]{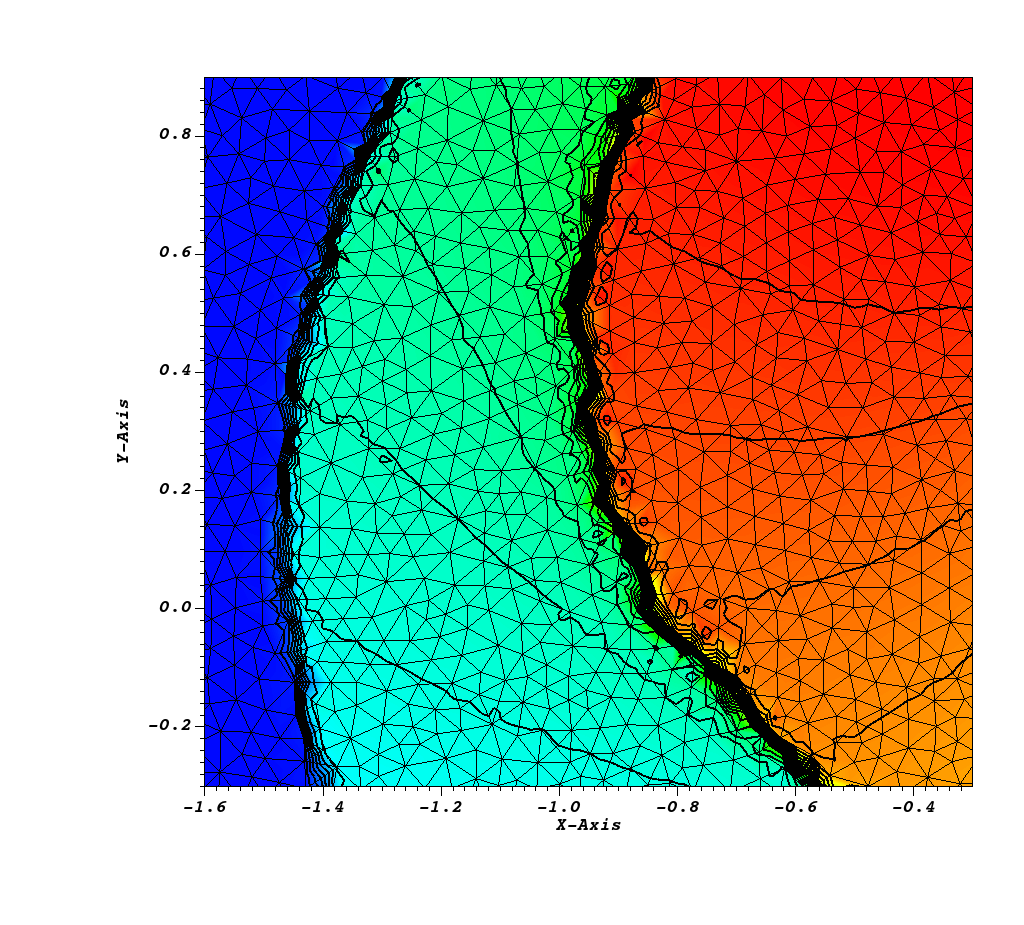}} 
   \subfigure[ave,Tadmor]{\includegraphics[width=0.45\textwidth]{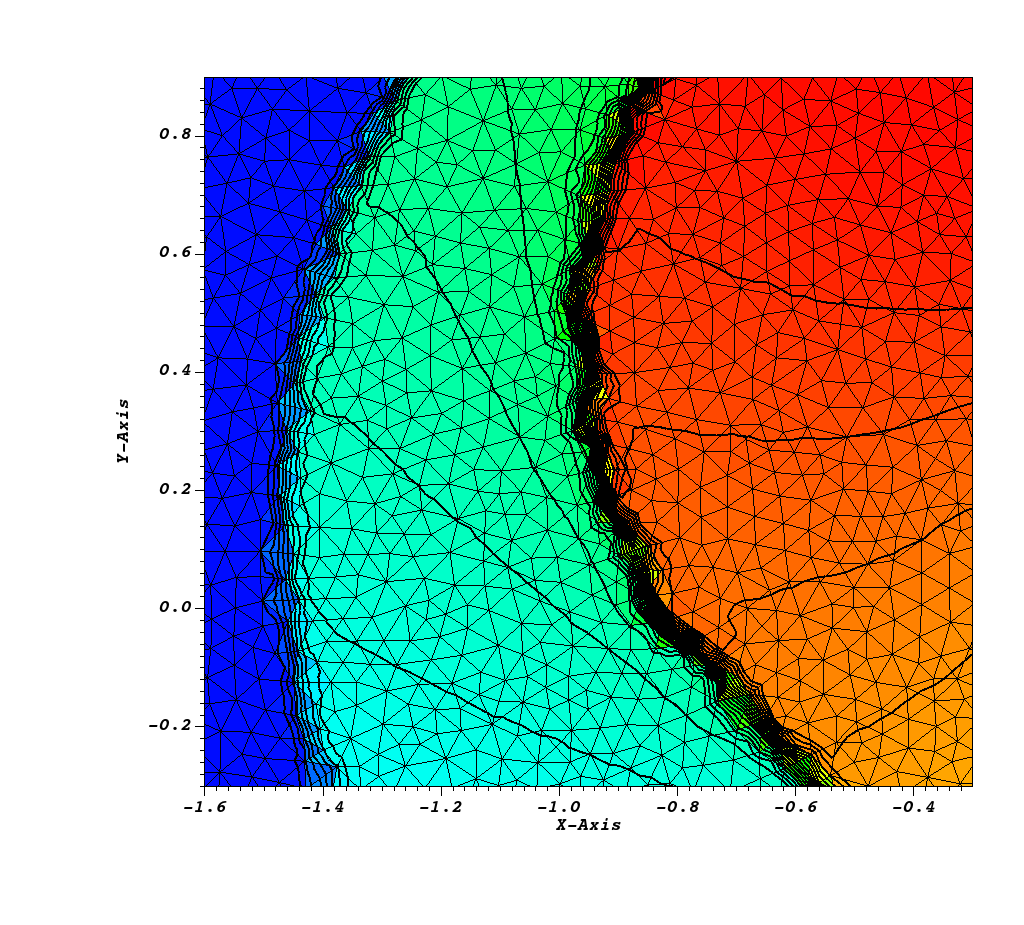}}
    \subfigure[point, BP OE Tadmor]{\includegraphics[width=0.45\textwidth]{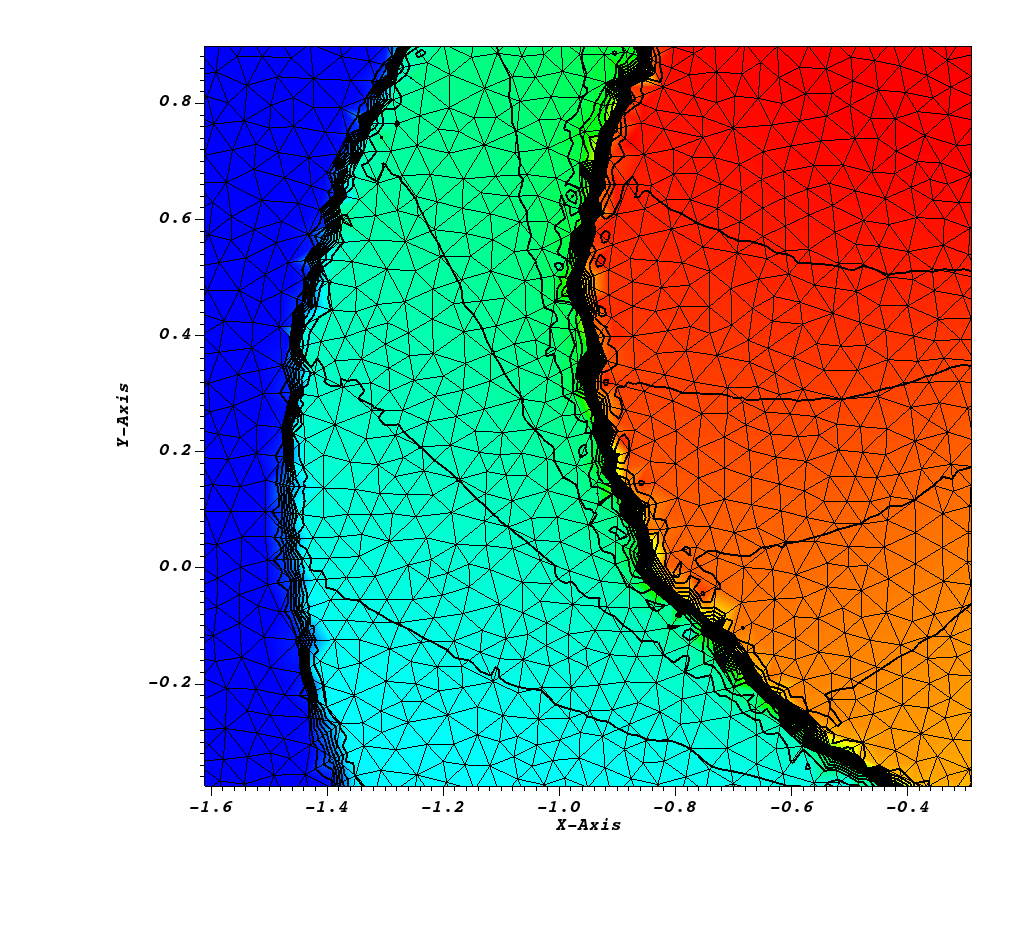}} 
    \subfigure[ave, BP OE Tadmor]{\includegraphics[width=0.45\textwidth]{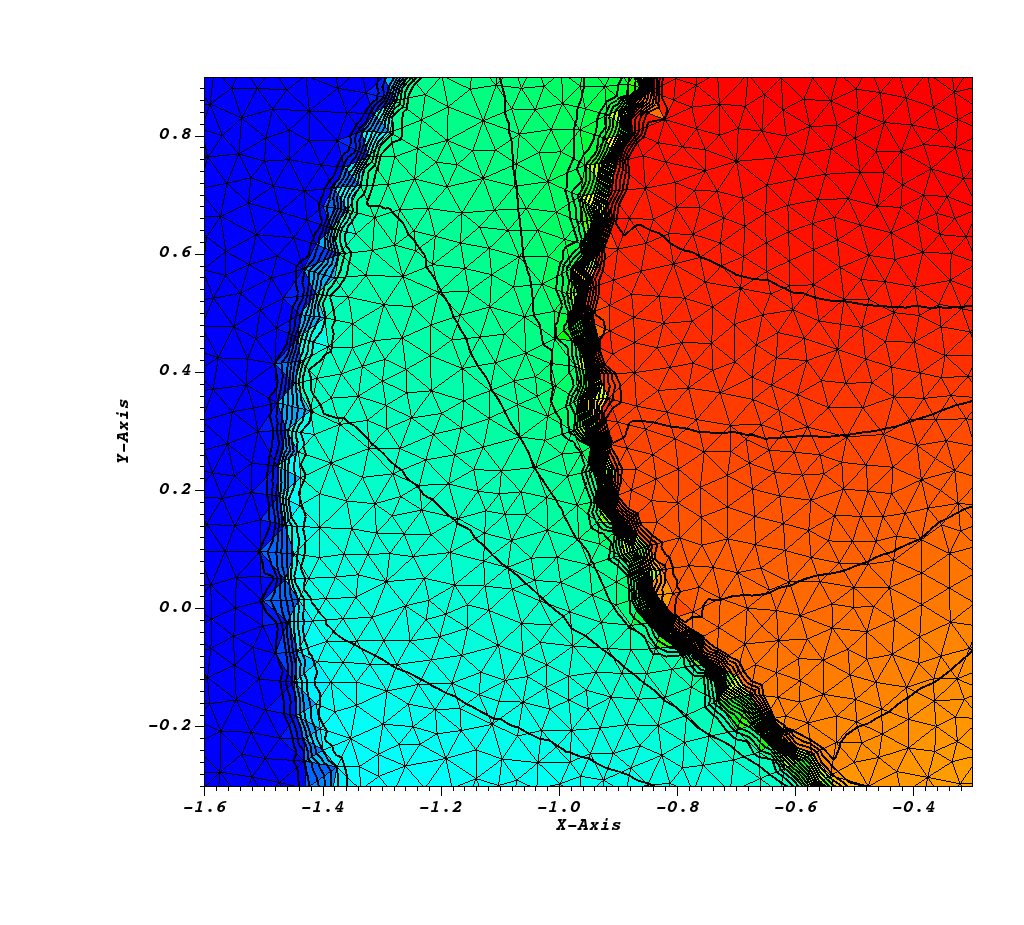}}
       \subfigure[point,Rusanov]{ \includegraphics[width=0.45\textwidth]{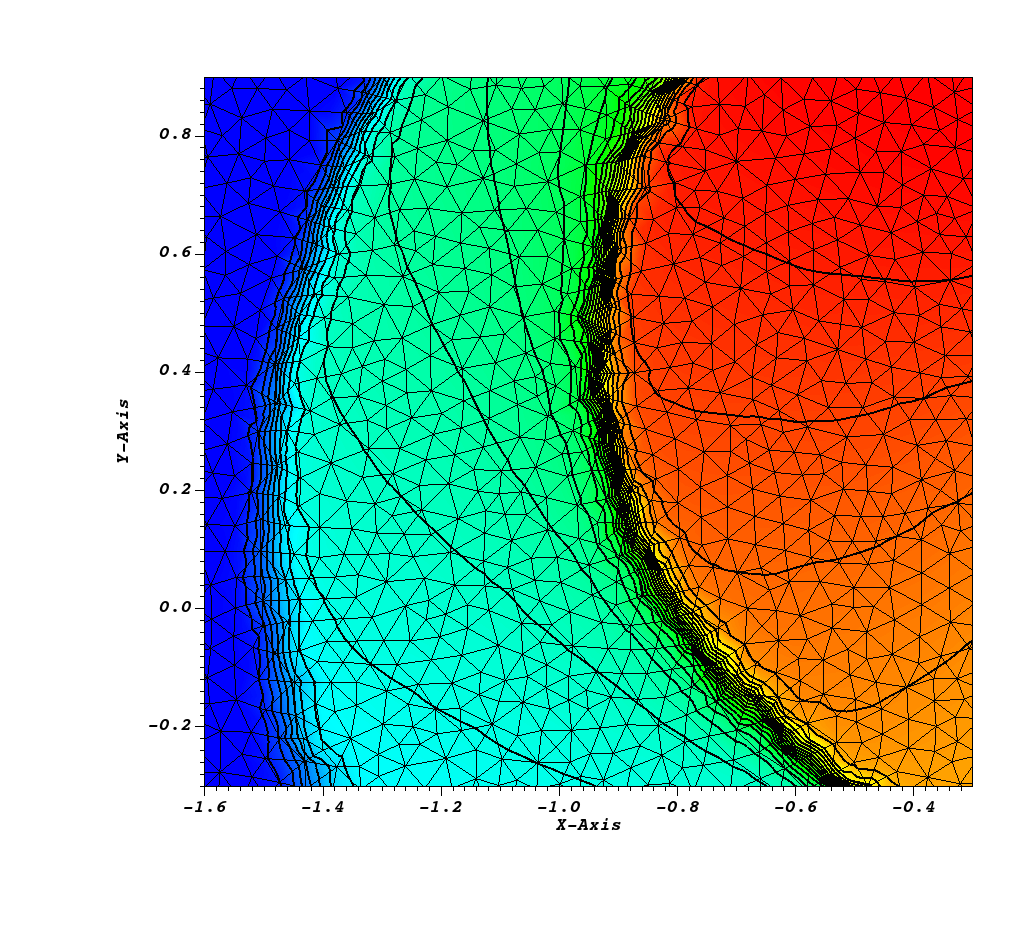}}
       \subfigure[ave,Rusanov]{ \includegraphics[width=0.45\textwidth]{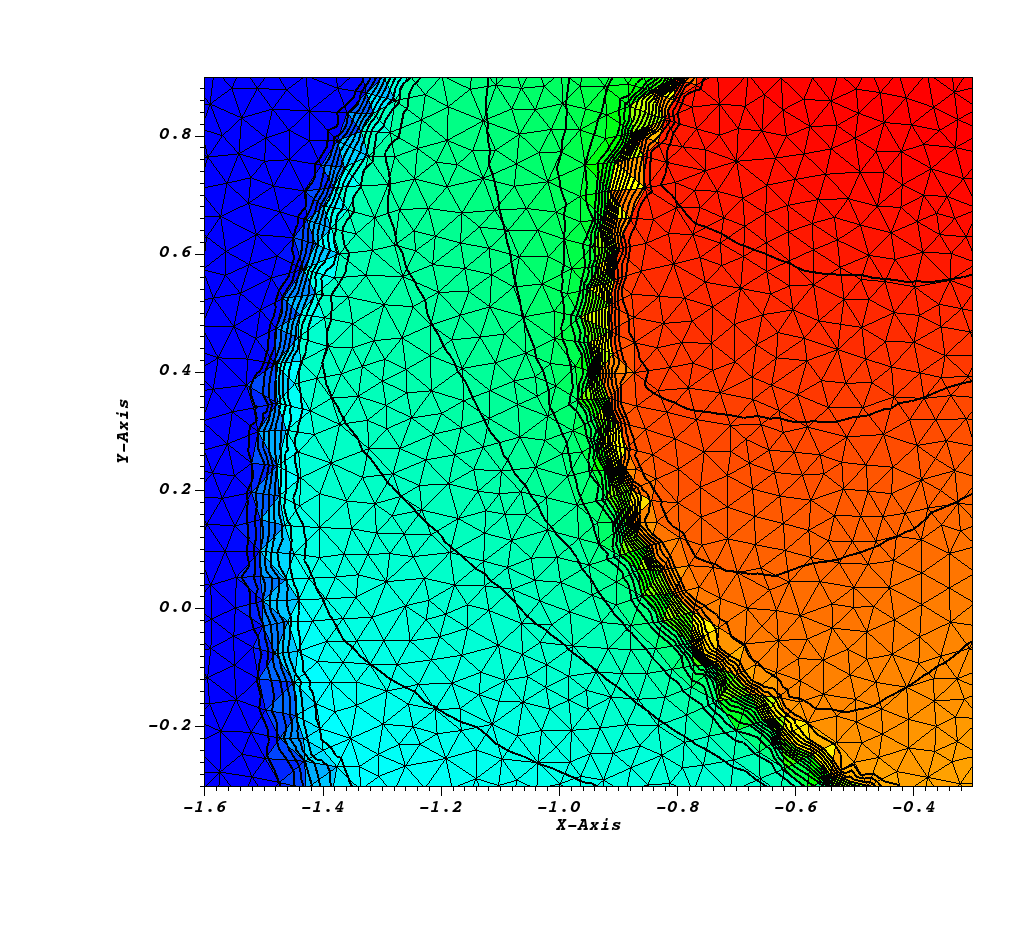}}
    \end{center}
    \caption{\label{fig:KPP:zoom}KPP problem, zoom of the mesh and the isovalues of the solution for: Tadmor correction only,  BP+OE+Tadmor and Rusanov soluton.}
    \end{figure}
\begin{table}[h]
\begin{center}
\begin{tabular}{| l || r|r||r|r|}
\hline
&\multicolumn{2}{c||}{Point values}&\multicolumn{2}{c|}{average}\\
\hline
Scheme & Min & Max & Min & Max\\
\hline
BP &$0.7854$&$11.$&$0.7854$&$11$\\
BP OE&$0.7854$&$11.$&$0.7854$&$11$\\
BP OE Tadmor&$0.7854$&$11.$&$0.7854$&$11$\\
Tadmor&$0.6719$ & $11.04$ & $0.6265$ &$11.13$\\
\hline
\end{tabular}
\end{center}
\caption{Min and max of the KPP solution for the different schemes.}
\end{table}

\section{Conclusions}
We have shown how to construct an entropy stable Active flux type scheme. Adding this condition to Bound preserving conditions and non oscillatory scheme, we get a scheme that is entropy stable, non oscillatory and bound preserving. Though our numerical examples are only given for triangular type meshes, the results will go without modification to polygonal type ones. We also have shown that nothing need to be done for the point value update. The structure of the proof shows that this result will also go to very high order active flux type schemes, as in \cite{BarsukowAbgrall,barsukow2025generalizedactivefluxmethod} with the same methodology: only act on average values.

We have (yet) not been able to show analytically that the entropy correction we propose is compatible with the preservation of the formal accuracy. However, the results, such as those of Figure \ref{fig:KPP:zoom}-(a), (b), (c) and (d), compared to \ref{fig:KPP:zoom}-(e) and (f), indicate that the scheme is much less dissipative than the baseline one, the Rusanov scheme.

\section*{Acknowledgements.} YL is supported by  SNSF grant 200020\_204917. Most of this research has been done while RA was visiting Professor E. Sonnendr\"ucker at the Max Plank Institute for Plasma Physics in G\"arching bei M\"unchen, Germany thanks to the support of the Alexander von Humboldt-foundation and the  Carl Friedrich von Siemens foundation, Germany.

\printbibliography
\appendix

\section{Proof of Lemma \protect{\ref{sec:rd:lemme1}}}\label{sec:appendix}

\begin{proof}[Proof of lemma \ref{sec:rd:lemme1}]
    We show that $$
    \lim_{h\rightarrow 0}\bigg (\sum_{n=0}^N\Delta t\sum_{K\subset Q}\sum_{\bsigma\in K} \Vert \bbu_\bsigma^{n+1}-\bbu_\bsigma^n\Vert \bigg )=0$$
    and $$\lim_{h\rightarrow 0}\bigg (\sum_{n=0}^N\Delta t\sum_{K\subset Q}\sum_{\bsigma\in K}\Vert \bbu_{\bsigma}^n-\xbar{\bbu}_K^n\Vert\bigg )=0$$
    First, there exists $C_1$ depending only on the family of meshes such that
    $$\sum_{n=0}^N \sum_{K\subset Q} \vert K\vert h \sum_{\bsigma\in K}\Vert \bbu_\bsigma^{n+1}-\bbu_\bsigma^n\Vert \leq \sum_{n}^N h\int_K\Vert \bbu_h(\bbx, t+\Delta t)-\bbu_h(\bbx,t)\Vert d\bbx\leq \int_0^T\int_Q\Vert \bbu_h(\bbx, t+\Delta t)-\bbu_h(\bbx,t)\Vert d\bbx dt$$
    because $\Delta t\leq C h$ since the family of solution is bounded and the mesh regular.
    Since the family $\{\bbu_h(\bullet, \bullet)\}$ is bounded, there exists $\bbv_1\in L^2(Q\times [0,T]$ such that $\bbu_k(\bullet, \bullet)\rightarrow \bbv_1$ in the $L^2$ weak-$\star$ topology. Similarly, there exists $\bbv_2\in L^2(Q\times [0,T])$ such that 
    $\bbu_k(\bullet, \bullet+\Delta t)\rightarrow v_2$ in the $L^2$ weak-$\star$ topology.
    Since $\bbu_h\rightarrow \bbv$ in $L^2(Q\times [0,T]$, $Q$ bounded and $L^2\subset L^1$ topologicaly, there exists a subsequence $\bbu_{h_n}$ such that $\bbu_{h_n}\rightarrow \bbv$ almost everywhere. Hence, using again the boundedness of $\bbu_h$ and the Lebesgue dominated convergence theorem, we see that $\bbv_1=\bbv_2=\bbv$, so that
    $$\int_0^T\int_Q\Vert \bbu_h(\bbx, t+\Delta t)-\bbu_h(\bbx,t)\Vert d\bbx dt\rightarrow0 .$$
    This gives the first part. The second part is obtained using the same arguments together with
    $$\sum_{\bsigma, \bsigma'\in K}\Vert \bbu_\bsigma^n-\bbu_{\bsigma'}^n\Vert +\sum_{\bsigma\in K}\Vert \bbu_\bsigma^n-\xbar{\bbu}_K^n\Vert \leq C\sum_{\bsigma\in K}\Vert \bbu_\bsigma^n-\xbar{\bbu}_K^n\Vert
    $$
    thanks to the triangle inequality applied to the first term and $C$ comes again from the regularity of the mesh.
\end{proof}
\end{document}